\newcommand{\Chi}{\mbox{\Large$\chi$}_I}
\newcommand{\CChi}{\mbox{\Large$\chi$}}
\newcommand{\vv}{\mathfrak{v}_{(f,I,c)}}
\newcommand{\R}{\mathbb{R}}
\newcommand{\N}{\mathbb{N}}
\newcommand{\T}{\mathbb{T}_m}
\renewcommand{\S}{\mathcal{S}}
\newcommand{\F}{\mathcal{F}}
\newcommand{\defi}{\mathrel{\mathop :}=}
\DeclareMathOperator{\med}{\,median\,}
\newtheorem{teo}{Theorem}[section]
\newtheorem{lem}[teo]{Lemma}
\newtheorem{co}[teo]{Corollary}
\newtheorem{pro}[teo]{Proposition}
\theoremstyle{remark}
\newtheorem{re}[teo]{Remark}
\newtheorem{ex}[teo]{Example}
\theoremstyle{definition}
\newtheorem{de}[teo]{Definition}
\title[Nonlinear Harmonic Measures on Trees]{Estimates for
Nonlinear Harmonic Measures on Trees}
\author[L. M. Del Pezzo, C. A. Mosquera and J. D. Rossi]
{Leandro M. Del Pezzo, Carolina A. Mosquera and Julio D. Rossi}
\address{Leandro M. Del Pezzo and Carolina A. Mosquera
\hfill\break\indent
CONICET and Departamento  de Matem{\'a}tica, FCEyN,
Universidad de Buenos Aires,
\hfill\break\indent Pabellon I, Ciudad Universitaria (1428),
Buenos Aires, Argentina.}
\email{{\tt ldpezzo@dm.uba.ar, mosquera@dm.uba.ar}}
\address{Julio D. Rossi \hfill\break\indent
Departamento  de An{\'a}lisis Matem{\'a}tico, Universidad de Alicante,
\hfill\break\indent Ap. correo 99, 03080, Alicante, SPAIN.
}
\email{{\tt julio.rossi@ua.es}}
\thanks{
Leandro M. Del Pezzo was partially supported by UBACyT 20020110300067
and CONICET PIP 5478/1438  (Argentina) ,
Carolina A. Mosquera was partially supported by UBACyT 20020100100638, PICT 0436 and
CONICET PIP  112 200201 00398 (Argentina)
and Julio D. Rossi  was partially supported by MTM2011-27998,
(Spain) }
\begin{document}
\begin{abstract}
In this paper we give some estimates for
nonlinear harmonic measures on trees. In particular, we
estimate in terms of the size of a set $D$ the value at the
origin of the solution to
$
u(x)=F((x,0),\dots,(x,m-1))$ for every $x\in\T,
$ a directed tree with $m$ branches
with initial datum $f+\chi_D$.
Here $F$ is an averaging operator on $\R^m$,
$x$ is a vertex of a  directed tree $\T$ with regular
$m$-branching and $(x,i)$ denotes a successor of that vertex
for $0\le i\le m-1$.
\end{abstract}

\maketitle

\section{Introduction}
Let us first recall some well known facts for the classical $p-$Laplacian.
Let $\Omega$ be the unit ball in $\R^N,$ $N>1.$  We say that $u$ is
$p-$harmonic ($p-$superharmonic/$p-$subharmonic) in $\Omega$ ($1<p<\infty$) if
$u\in W^{1,p}(\Omega)$ and
\[
\int_{\Omega}|\nabla u|^{p-2}\nabla u\nabla\psi\, dx=0, \qquad (\geq 0 /\leq 0)
\]
whenever $\psi \in C^{\infty}_0(\Omega)$ ($\psi \geq 0$).
Let $E$ be a subset of $\partial\Omega.$ Consider the following class
\[
U_p(E)=\Big\{v\colon v\ge 0 \mbox{ and } p-\mbox{superharmonic s. t.}\liminf_{x\in\Omega\,x\to y}v(y) \ge\CChi_E(y)\, \forall
y\in\partial\Omega\Big\}.
\]
The $p-$harmonic measure of the set $E$ relative to the
domain $\Omega$ is the function $\omega_p(\cdot, E)$
whose value at any $x\in\Omega$ is given
by
$
\omega_p (x,E) = \inf\{v(x) \colon v \in U_p(E)\}$.
We simply denote $\omega_p(E)$ when $x=0.$
For a deeper discussion about of $p-$harmonic measure, we refer the reader to \cite{AM, BBS, GLM, HKM, K, LMW, M1}.

In this context, the following problem for the $p-$Laplacian
remains open, see \cite{wolf}.

\noindent {\bf Boundary Comparison Principle.} For $\delta>0$
consider $I_\delta$ a spherical cap with length
$\nicefrac{\delta}2.$ Given $\epsilon>0,$ find
$\delta=\delta(\epsilon,M,p)>0$ such that
\[
|u(0)-v(0)|<\epsilon
\]
for all $p-$harmonic functions $u$ and $v$ in $\Omega$ that extend
to $\overline{\Omega},$ are bounded $\|u\|_{\infty}\le M,$
$\|v\|_{\infty}\le M,$ and satisfy $u(y)=v(y)$ for all
$y\in\partial\Omega\setminus I_\delta.$

Closely related to this problem is the following:

\noindent{\bf $p-$harmonic Measure Estimates.} Does there exist
$\alpha>0$ such that
\[
\omega_p(I_\delta)\sim\delta^{\alpha} \qquad \mbox{ as }\delta \to 0 ?
\]

In \cite{PSSW}, the authors study the second question  in the case $p=\infty.$ They showed that
$\omega_{\infty}(I_\delta)\sim\delta^{\frac{1}{3}}$. Similar ideas can be used to obtain the result for $1<p<\infty$, see \cite{PS}.

In this work
we provide answers to both problems for the $F$-harmonic function
in a directed tree where  $F$ is an averaging operator on $\R^m,$
see below for a precise definition. Regular trees are discrete models
of the unit ball of $\R^N$ and hence our results can be seen as a contribution
in order to the study of the previously mentioned open problem.

We remark that for the linear case, $p=2$, the solutions to these problems are well known and the starting point for their study is the mean value property for harmonic functions. One of the main interests of the present work is to show what kind of results
can be proved when the mean value property under consideration is nonlinear.

Now, let us introduce briefly some definitions and notations needed to make
precise the statements of our main results (but we refer the reader to Section~2 where more details can be found).
Let $F\colon\R^m\to\R$ be a continuous function. We call $F$
an averaging operator if it satisfies the following: $$F(0,\dots,0)=0 \mbox{ and } F(1,\dots,1)=1;$$
$$F(tx_1,\dots,tx_m)=t F(x_1,\dots,x_m);$$
$$F(t+x_1,\dots,t+x_m)=t+ F(x_1,\dots,x_m),$$
for all $t\in\R;$ $$F(x_1,\dots,x_m)<\max\{x_1,\dots,x_m\},$$ if not all
		   $x_j$'s are equal; $F$ is nondecreasing with respect to each variable; in addition, we will assume that
$F$ is permutation
invariant, that is,
$
F(x_1,\dots,x_m)= F(x_{\tau(1)},\dots,x_{\tau(m)})
$
for each permutation $\tau$ of $\{1,\dots,m\}$ and that there exists $0<\kappa<1$
such that
\begin{equation}\label{Pro.intro}
F(x_1+c,\dots,x_m)\le F(x_1,\dots,x_m)+c\kappa
\end{equation}
for all $(x_1,\dots,x_m)\in\R^m$ and for all $c>0.$

As examples of averaging operators we mention the following:
The first example is taken from \cite{KLW}. For $1<p<+\infty,$
	the operator
	$F^p(x_1,\dots,x_m)=t$ from $\R^m$ to $\R$  defined
	implicity by
	\[
	\sum_{j=1}^m (x_j-t)|x_j-t|^{p-2}=0
	\]
	is a permutation invariant averaging operator.
Next, we consider, for $0\le\alpha\le1$ and $0<\beta \leq 1$ with
		$\alpha+\beta=1$
		\begin{align*}
			&F_0(x_1,\dots,x_m)=\frac{\alpha}2
			\left(\max_{1\le j\le m}
			\{x_j\}+\min_{1\le j\le m}\{x_j\}
			\right) + \frac{\beta}m\sum_{j=1}^m x_j,\\
			&F_1(x_1,\dots,x_m)=\alpha
			\underset{{1\le j\le m}}{\med}\{x_j\}+
			 \frac{\beta}m\sum_{j=1}^m x_j,\\
			 &F_2(x_1,\dots,x_m)=\alpha
			\underset{{1\le j\le m}}{\med}\{x_j\}+
			 \frac{\beta}2
			 \left(\max_{1\le j\le m}
			\{x_j\}+\min_{1\le j\le m}\{x_j\}
			\right),
		\end{align*}
		where
		\[
\underset{{1\le j\le m}}{\med}\{x_j\}\, \defi
   \begin{cases}
  y_{\frac{m+1}2} & \text{ if }m \text{ is even},  \\
  \displaystyle\frac{y_{\frac{m}2}+ y_{(\frac{m}2 +1)}}2
  & \text{ if }m \text{ is odd},
  \end{cases}
\]
with $\{y_1,\dots, y_m\}$ a nondecreasing rearrangement of
$\{x_1,\dots, x_m\}.$
$F_0, F_1$ and $F_2$ are permutation invariant
averaging operators. Note that $F_0$ and $F_1$ verify \eqref{Pro.intro} but $F_2$ does not.

Associated with an averaging operator $F$ we have an equation on a tree.
In what follows $x$ is a vertex of a directed tree $\T$ with regular
$m$-branching and $(x,i)$ is a successor of that vertex
for all $0\le i\le m-1$ (see Section 2 for more details).
A function $u\colon\T\to\R$ is called $F$-subharmonic
function if the
inequality
$
u(x)\le F(u(x,0),\dots,u(x,m-1))
$
holds for all $x\in\T$ and $F$-superharmonic if the opposite
inequality holds for all $x\in\T.$ We say that $u$ is $F$-harmonic
if $u$ is  both $F$-subharmonic and $F$-superharmonic, that is, $u$ is a solution to the equation
\[
u(x) = F(u(x,0),\dots,u(x,m-1)).
\]

Let $f:[0,1]\to\R$ be a bounded function,
$c>0$ and $E\subset\partial\T.$
We define $U_F(f,E,c)$ as the set of all $F-$superharmonic functions $u$ such that
\[
\liminf_{k\to+\infty} u(x_k)\ge
f(\pi)+c\CChi_{E}(\pi),
\]
for all $\pi=(x_1, \dots, x_k, \dots)\in\partial\T.$

Finally, we need the notion of solution to the Dirichlet Problem, (DP) in the sequel. Given  a bounded function $f\colon [0,1]\to\R,$ $u$ is a solution to the Dirichlet Problem with boundary data $f$ if it is $F$-harmonic
and verifies
\[
\lim_{k\to+\infty} u(x_k)= f(\pi), \qquad \forall \pi=(x_1, \dots, x_k, \dots)\in\partial\T.
\]

Now, we are ready to state the main result of this paper.

\begin{teo}\label{teoestimacion}
	Let $F$ be a permutation invariant averaging operator with
	the property \eqref{Pro.intro}, $f:[0, 1]\to\R$ be a continuous
	function, $I$ be a subinterval of $[0,1]$ with measure $|I|$ and $c>0.$
	If $u$ is the solution of \textnormal{(DP)}
	with boundary
	data $f$, then
	\[
		0\le \inf\Big\{w(\emptyset)-u(\emptyset)
		\colon w\in U_F(f,I,c)\Big\}\le
		2c\left(m|I|\right)^\gamma
	\]
	for any $\gamma\le -
	\log_m(\kappa).$
\end{teo}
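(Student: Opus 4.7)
The argument splits naturally into the two inequalities. The lower bound follows directly from the comparison principle for $F$-sub/superharmonic functions on $\T$ (established earlier in the paper): every $w\in U_F(f,I,c)$ is $F$-superharmonic with $\liminf_{k}w(x_k)\ge f(\pi)+c\chi_I(\pi)\ge f(\pi)$ along every end $\pi\in\partial\T$, while $u$ is $F$-harmonic with boundary limit $f(\pi)$, so $w\ge u$ on $\T$ and in particular at $\emptyset$.

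For the upper bound the plan is to construct one explicit $w\in U_F(f,I,c)$ and estimate $w(\emptyset)$ directly. Let $n$ be the largest integer with $|I|\le m^{-n}$, so $m^{-n-1}<|I|\le m^{-n}$. Since $I$ is an interval of length at most $m^{-n}$, it is contained in at most two consecutive $m$-adic intervals $J_1,J_2$ of length $m^{-n}$, corresponding to vertices $x_1,x_2\in\T$ at depth $n$ (possibly $x_1=x_2$). Define $w=u+c$ on the subtrees rooted at $x_1$ and $x_2$, $w=u$ on all vertices that are neither in those subtrees nor ancestors of $x_1$ or $x_2$, and at each of the finitely many remaining ancestors $y$ set $w(y)\defi F(w(y,0),\dots,w(y,m-1))$, built upward from level $n$. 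By construction $w$ is $F$-harmonic on $\T$ with boundary limits equal to $f+c\chi_{J_1\cup J_2}\ge f+c\chi_I$, so $w\in U_F(f,I,c)$.

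It remains to estimate $\Delta(y)\defi w(y)-u(y)$. Because $u$ is $F$-harmonic, applying \eqref{Pro.intro} once per coordinate (which is permitted by permutation invariance of $F$) produces, at every ancestor $y$, the linear recursion
\[
\Delta(y)\le\kappa\sum_{i=0}^{m-1}\Delta(y,i).
\]
At level $n$, $\Delta=c$ on $\{x_1,x_2\}$ and vanishes elsewhere. Iterating this estimate one level at a time along each ancestral path (only one child is bumped per level), the ancestor $j$ levels above $x_i$ satisfies $\Delta\le c\kappa^j$. Where the two ancestral paths first merge, the two contributions simply add, and above that point a single combined branch again picks up a factor $\kappa$ per level, yielding $\Delta(\emptyset)\le 2c\kappa^n$. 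Finally, $m|I|>m^{-n}$ implies $(m|I|)^{-\log_m\kappa}\ge\kappa^n$, hence $2c\kappa^n\le 2c(m|I|)^{\gamma}$ for every $\gamma\le-\log_m\kappa$ (the regime $m|I|\ge 1$ is handled separately using $\kappa<1$).

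The main obstacle I anticipate is the bookkeeping in the propagation step: tracking the bumps precisely enough that the two ancestral paths meeting at their first common ancestor yield only the constant $2$ in the final bound, and making sure that the iterated use of \eqref{Pro.intro} at a single vertex is indeed justified for arbitrary nonnegative bumps on all coordinates.
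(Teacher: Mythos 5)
Your overall strategy is the same as the paper's: build an explicit $F$-harmonic "bumped" function above a two-interval $m$-adic cover of $I$, propagate the bump upward one level at a time using \eqref{Pro} and permutation invariance, and observe that the two ancestral paths contribute at most a factor $2$. The recursion $\Delta(y)\le\kappa\sum_{i}\Delta(y,i)$ for $\Delta=w-u\ge 0$ is correct; it follows from iterating \eqref{Pro} coordinate by coordinate exactly as in Remark~\ref{doblePro}, since each $\Delta(y,i)\ge 0$ by the comparison principle. This is in substance the content of the paper's Lemmas~\ref{1int} and~\ref{2int}.

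Where you genuinely part ways with the paper is the claim that your explicit $w$ lies in $U_F(f,I,c)$, and this is where there is a real gap. The map $\psi$ is not injective: an interior $m$-adic point $k/m^n$ is the image of two branches, one entering from the left (ending in $m-1,m-1,\dots$ inside $[(k-1)/m^n,k/m^n]$) and one entering from the right. If $I$ has an $m$-adic endpoint, say its left endpoint is $k/m^n$, then the left-entering branch $\pi$ has $\psi(\pi)=k/m^n\in I$ but does not pass through $x_1$ or $x_2$; along this branch $w$ has limit $f(\psi(\pi))$ only, not $f(\psi(\pi))+c$. Thus $\liminf_k w(x_k)<f(\pi)+c\CChi_I(\pi)$ and $w\notin U_F(f,I,c)$. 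The paper avoids this exact pitfall by \emph{not} asserting $\vv\in U_F$; instead Lemma~\ref{vinf} identifies the infimum via an approximating sequence $w_l=\mathfrak{v}_{(f,I\cup I_l,c)}$ whose cover is fattened by an extra $m$-adic interval of length $m^{-n-l}$ at the exposed endpoint, precisely so that the offending boundary branch is covered, and then lets $l\to\infty$. To repair your argument you could either reproduce this approximation step, or enlarge your cover to strictly contain $I$ in its interior and absorb the extra interval into the constant; as written, however, the step ``so $w\in U_F(f,I,c)$'' does not hold for all admissible $I$.

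One further small point: you acknowledge that the regime $m|I|\ge 1$ needs separate handling for the final algebraic step $2c\kappa^n\le 2c(m|I|)^\gamma$. Be aware that for $m|I|>1$ and $\gamma$ very negative this inequality can in fact fail, so the separate handling must restrict to the meaningful range of $\gamma$ (the paper itself is silent on this corner case and implicitly works with $\gamma$ near $-\log_m\kappa>0$, where the estimate is clean).
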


Note that the obtained bound depends precisely both on the tree ($m$ is the parameter that controls the branching of the tree), the interval where the perturbation of $f$ takes place (through its measure), the size of the perturbation (given by $c$) and on the operator $F$ ($\kappa$ appears in \eqref{Pro.intro}).

We immediately deduce, taking $f\equiv 0$ and $c=1,$ the following
result.

\begin{co}[$p-$harmonious Measure Estimates]
	If $I$ is a subinterval of $[0,1],$ then
	\begin{equation*}
		0
	\le \omega_F(I)\le	
	2 \left(m|I|\right)^\gamma
	\end{equation*}
	for all $\gamma\le-\log_m(\kappa).$
\end{co}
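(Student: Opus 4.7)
The corollary follows as a direct specialization of Theorem~\ref{teoestimacion}, so the plan is simply to set $f\equiv 0$ and $c=1$ in that theorem and to translate both sides of its conclusion back into the language of $F$-harmonic measure. The first step is to verify that the solution of \textnormal{(DP)} with boundary datum $f\equiv 0$ is $u\equiv 0$: since $F(0,\dots,0)=0$ by the definition of an averaging operator, the constant function $0$ is $F$-harmonic on $\T$ and has boundary limit $0$ along every $\pi\in\partial\T$, so it solves \textnormal{(DP)}; uniqueness, which I would invoke from the standard comparison principle for $F$-sub/superharmonic functions on $\T$, then pins $u$ down. Consequently $u(\emptyset)=0$ and the quantity $w(\emptyset)-u(\emptyset)$ appearing in Theorem~\ref{teoestimacion} collapses to $w(\emptyset)$.

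The second step is to identify the admissible class. With $f\equiv 0$ and $c=1$, the set $U_F(0,I,1)$ is by definition the collection of $F$-superharmonic functions $w$ on $\T$ such that $\liminf_{k\to\infty}w(x_k)\ge\chi_I(\pi)$ for every $\pi=(x_1,x_2,\dots)\in\partial\T$, and this is precisely the admissible class entering the definition of the $F$-harmonic measure $\omega_F(\,\cdot\,,I)$ evaluated at the root $\emptyset$. Therefore
\[
\omega_F(I)=\inf\bigl\{w(\emptyset):w\in U_F(0,I,1)\bigr\},
\]
and substituting into Theorem~\ref{teoestimacion} gives $0\le\omega_F(I)\le 2(m|I|)^\gamma$ for every $\gamma\le -\log_m(\kappa)$, as claimed. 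I do not expect any substantive obstacle, since the analytic work is entirely encapsulated in Theorem~\ref{teoestimacion}; the only minor bookkeeping is to observe that the nonnegativity built into the classical definition of harmonic measure is automatic here, following from the comparison of any $w\in U_F(0,I,1)$ with $u\equiv 0$ via the same comparison principle used above.
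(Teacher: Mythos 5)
Your proposal is correct and follows exactly the route the paper intends: the paper states the corollary "immediately" by taking $f\equiv 0$ and $c=1$ in Theorem~\ref{teoestimacion}, and you spell out the (suppressed) bookkeeping — that the solution of (DP) with zero data is $u\equiv 0$, and that $\inf\{w(\emptyset):w\in U_F(0,I,1)\}$ is precisely $\omega_F(I)$ by definition of $F$-harmonic measure.
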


Finally, we state the Boundary Comparison Principle.

\begin{co}[Boundary Comparison Principle]\label{BCP} Let $F$
be a permutation invariant averaging operator that satisfies
the property \eqref{Pro.intro}. Given $\varepsilon>0$ and $M>0,$
there exists $\delta=\delta(\varepsilon,M, m)>0$ so
that, if
$f,g\colon [0,1]\to\mathbb{R}$ are continuous functions,
$f=g$ in $[0,1]\setminus I,$ and
$\|f\|_\infty+\|g\|_\infty\le M,$
where $I$ is a subinterval of $[0,1]$ with
$|I|=\delta,$ then
\[
|u(\emptyset)-v(\emptyset)|<\varepsilon
\]
where $u$ and $v$ are the solutions of
\textnormal{(DP)} with boundary data
$f$ y $g $ respectively
\end{co}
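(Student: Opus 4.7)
The plan is to apply Theorem \ref{teoestimacion} twice, once with base datum $f$ and once with base datum $g$, using the perturbation amplitude $c:=\|f-g\|_\infty$, and to combine the two resulting inequalities via the comparison principle for $F$-sub/superharmonic functions on $\T$.

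First I would set $c:=\|f-g\|_\infty$. Since $f\equiv g$ on $[0,1]\setminus I$, one has $|f-g|\le c\CChi_I$ pointwise on $[0,1]$; equivalently, both $g\le f+c\CChi_I$ and $f\le g+c\CChi_I$. Moreover $c\le \|f\|_\infty+\|g\|_\infty\le M$, so the perturbation amplitude is controlled uniformly in terms of $M$.

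Next I would observe that the DP-solution $v$ with datum $g$ lies below every element of $U_F(f,I,c)$. Indeed, for $w\in U_F(f,I,c)$ and any branch $\pi=(x_k)_{k\ge 1}$,
\[
\limsup_{k\to\infty} v(x_k)=g(\pi)\le f(\pi)+c\CChi_I(\pi)\le \liminf_{k\to\infty} w(x_k),
\]
so by the comparison principle for $F$-sub/superharmonic functions on a directed tree (a standard consequence of the strict averaging property of $F$ recalled in Section~2) one has $v\le w$ on all of $\T$. Taking the infimum over $w$ and applying Theorem \ref{teoestimacion} yields
\[
v(\emptyset)\le \inf_{w\in U_F(f,I,c)} w(\emptyset)\le u(\emptyset)+2c\bigl(m|I|\bigr)^\gamma.
\]
Swapping the roles of $f$ and $g$ (now $u$ is $F$-harmonic with $\lim u=f\le g+c\CChi_I$, and one compares it against $w'\in U_F(g,I,c)$) gives the symmetric bound $u(\emptyset)\le v(\emptyset)+2c(m|I|)^\gamma$. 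Combining,
\[
|u(\emptyset)-v(\emptyset)|\le 2c\bigl(m|I|\bigr)^\gamma\le 2M(m\delta)^\gamma.
\]

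Finally, fix any $\gamma\in(0,-\log_m\kappa]$; to conclude it suffices to choose $\delta=\delta(\varepsilon,M,m)>0$ so small that $2M(m\delta)^\gamma<\varepsilon$, e.g.\ $\delta<\tfrac{1}{m}\bigl(\varepsilon/(2M)\bigr)^{1/\gamma}$. The only step that is not mere bookkeeping is the pointwise comparison $v\le w$ invoked above, which I expect to be the main (though routine) obstacle: it requires that a boundary inequality stated in $\liminf/\limsup$ form along every branch propagate to an interior pointwise inequality, and this in turn rests on the strict averaging property $F(x_1,\dots,x_m)<\max\{x_j\}$ (for nonconstant tuples) imposed on $F$.
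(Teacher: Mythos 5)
Your proposal is correct and takes essentially the same approach as the paper: the paper also reduces to Theorem \ref{teoestimacion} by observing $g\le f+M\CChi_I$ (using $c=M$ directly rather than $c=\|f-g\|_\infty$, which you then bound by $M$ anyway) and invoking the comparison principle (Theorem \ref{cpdp1}, proved via Lemma \ref{cotasupdp1}) to get $v(\emptyset)\le w(\emptyset)$ for $w\in U_F(f,I,M)$, then symmetrizes and chooses $\delta<\frac1m(\varepsilon/(2M))^{1/\gamma}$. The pointwise comparison you flag as the main step is exactly the content of Lemma \ref{cotasupdp1}, already established in Section 3.
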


Let us end the introduction with a brief comment on previous bibliography.
For nonlinear mean values on a finite graph we refer to \cite{Ober} and references therein. For equations on trees like the ones considered here, see \cite{ary,KLW,KW} and \cite{s-tree,s-tree1}, where it is proved the existence and uniqueness of a solution using game theory. Here we use ideas from these references.
Nonlinear mean value properties that characterize solutions to PDEs can be found, for example, in \cite{MPR}, \cite{rv},
\cite{hr} and \cite{hr1}. These mean value properties reveal to be quite useful when designing numerical schemes that approximate solutions to the corresponding nonlinear PDEs, see \cite{O1,O2}.

{\bf Organization of the paper.}  In Section
\ref{previa} we collect some preliminary facts concerning
trees, averaging operators, $F-$harmonic functions and
$F-$harmonic measures; in Section \ref{DP} we prove existence and
uniqueness for the Dirichlet problem and a comparison principle;
Finally in Section \ref{esti} we prove Theorem \ref{teoestimacion} and Corollary \ref{BCP}.

\section{Preliminaries}\label{previa}

We begin with a review of the basic results that will be needed in subsequent sections.
The known results are generally stated without proofs, but we provide references where
the proofs can be found. Also, we introduce some of our notational conventions.

\subsection{Directed Tree}
Let $m\in\mathbb{N}$ be at least 3. In this work we consider a directed
tree $\T$ with regular $m-$branching, that is, $\T$ consists of
the empty set $\emptyset$ and all finite  sequences
$(a_1,a_2,\dots,a_k)$ with $k\in\N,$ whose coordinates $a_i$ are
chosen from $\{0,1,\dots,m-1\}.$ The elements in $\T$ are called
vertices. Each vertex $x$ has $m$ successors, obtained by adding
another coordinate. We will
denote by $\S(x)$ the set of successors of the vertex $x.$ A
vertex $x\in\T$ is called an $n-$level vertex ($n\in\mathbb{N}$) if
$x=(a_1,a_2,\dots,a_n).$  The set of all $n-$level vertices is
denoted by $\T^n.$

\begin{ex}
  Let $\kappa\in\mathbb{N}$ be at least 3.
  A $\nicefrac{1}{\kappa}-$Cantor set, that we denote by
  $C_{\nicefrac{1}{\kappa}}$,
is the set of all $x\in[0,1]$ that have a base $\kappa$ expansion
without the digit $1$,
  that is $x=\sum a_j\kappa^{-j}$ with
  $a_j\in\{0,1,\dots,\kappa-1\}$ with $a_j \neq 1$. Thus $C_{\nicefrac{1}{\kappa}}$
  is obtained from $[0,1]$ by removing the second $\kappa-$th part of the line segment $[0,
  1]$, and then removing the second interval of length $\nicefrac1{\kappa}$
  from the remaining intervals, and so on. This set
  can be thought of as a directed tree with regular $m-$branching with $m=\kappa-1$.

  For example, if $\kappa=3$, we identify $[0, 1]$ with $\emptyset,$
  the sequence $(\emptyset, 0)$ with the first interval right $[0,
  \nicefrac13]$,
  the sequence $(\emptyset, 1)$ with the central interval $[\nicefrac13,
  \nicefrac23]$ (that is removed),
  the sequence $(\emptyset, 2)$ with the left interval  $[\nicefrac23, 1],$
  the sequence $(\emptyset, 0 ,0)$ with the interval
  $[0, \nicefrac{1}{9}]$ and so on.
\begin{center}
\begin{tikzpicture} [font=\footnotesize,
grow=down, 
level 1/.style={->, sibling distance=12em},
level 2/.style={->,sibling distance=4em}, level distance=1cm,
level 3/.style={->,sibling distance=1em}, level distance=1cm]

    \node {$\emptyset$}
        child { node {0}
						child { node{0}
										child { node{0}}
										child { node{1}}
                        					child { node{2}}
								}
						child { node{1}
										child { node{0}}
										child { node{1}}
                        					child { node{2}}
								}
                        					child { node{2}
										child { node{0}}
										child { node{1}}
                        					child { node{2}}
								}
				}
       child { node {1}
						child { node{0}
										child { node{0}}
										child { node{1}}
                        					child { node{2}}
								}
						child { node{1}
										child { node{0}}
										child { node{1}}
                        					child { node{2}}
								}
                        					child { node{2}
										child { node{0}}
										child { node{1}}
                        					child { node{2}}
								}
				}
 child { node {2}
						child { node{0}
										child { node{0}}
										child { node{1}}
                        					child { node{2}}
								}
						child { node{1}
										child { node{0}}
										child { node{1}}
                        					child { node{2}}
								}
                        					child { node{2}
										child { node{0}}
										child { node{1}}
                        					child { node{2}}
								}
				}
    ;
\end{tikzpicture}
\end{center}
\end{ex}

\medskip

A branch of $\T$ is an infinite sequence of vertices, each followed by its immediate successor.
The collection of all branches forms the boundary of $\T$,  denoted by $\partial\T$.

\medskip

We now define a metric on $\T\cup \partial\T.$ The distance
between two sequences (finite or infinite) $\pi=(a_1,\dots,
a_k,\dots)$ and $\pi'=(a_1',\dots, a_k',\dots)$ is $m^{-K+1}$ when
$K$ is the first index $k$ such that $a_k\neq a_k';$ but when
$\pi=(a_1,\dots, a_K)$ and $\pi'=(a_1,\dots, a_K,
a_{K+1}',\dots),$ the distance is $m^{-K}.$  Hausdorff measure and
Hausdorff dimension are defined using this metric. We have
that $\T$ and $\partial\T$ have diameter one and $\partial\T$ has
Hausdorff dimension one. Now, we observe that the mapping
$\psi:\partial\T\to[0,1]$ defined as
\[
\psi(\pi)\defi\sum_{k=1}^{+\infty} \frac{a_k}{m^{k}}
\]
is surjective, where $\pi=(a_1,\dots, a_k,\dots)\in\partial\T$ and
$a_k\in\{0,1,\dots,m-1\}$ for all $k\in\mathbb{N}.$ Whenever
$x=(a_1,\dots,a_k)$ is a vertex, we set
\[
 \psi(x)\defi\psi(a_1,\dots,a_k,0,\dots,0,\dots).
\]
We can also associate to a vertex $x=(a_1,\dots,a_k)$ an
interval $I_x$ of length $\frac{1}{m^k}$ as follows
\[
 I_x\defi\left[\psi(x),\psi(x)+\frac1{m^k}\right].
\]
Observe that for all $x\in \T$, $I_x \cap \partial\T$ is the
subset of $\partial\T$ consisting of all branches that start at
$x$.
With an abuse of notation, we will write $\pi=(x_1,\dots,x_k,\dots)$
instead of $\pi=(a_1,\dots,a_k,\dots)$ where $x_1=a_1$ and
$x_k=(a_1,\dots,a_k)\in\S(x_{k-1})$ for all $k\in\N_{\ge2}.$

Finally we will denote by $\T^x$ the set of the vertices $y\in\T$
such that $I_y\subset I_x.$

\subsection{Averaging Operator}
The following definition is taken from \cite{ary}.
Let $F\colon\R^m\to\R$ be a continuous function. We call $F$
an averaging operator if it satisfies the following set of conditions:
\begin{enumerate}[(i)]
	\item $F(0,\dots,0)=0$ and $F(1,\dots,1)=1$;
	\item $F(tx_1,\dots,tx_m)=t F(x_1,\dots,x_m)$ for all $t\in\R;$
	\item $F(t+x_1,\dots,t+x_m)=t+ F(x_1,\dots,x_m)$ for all
			$t\in\R;$
	\item $F(x_1,\dots,x_m)<\max\{x_1,\dots,x_m\}$ if not all
		   $x_j$'s are equal;
    \item $F$ is nondecreasing with respect to each variable.
\end{enumerate}

\begin{re}\label{Fmax}
It holds that, if $(x_1,\dots,x_m),
(y_1,\dots,y_m)\in\R^m,$ then
\[
x_j\le y_j + \max_{1\le j\le m}\left\{ x_j-y_j\right\}
\]
for all $j\in\{1,\dots,m\}$. Let $F$ be an averaging operator. Then, by (iii) and (v),
\[
F(x_1,\dots,x_m)\le F(y_1,\dots,y_m)+
\max_{1\le j\le m}\left\{ x_j-y_j\right\}.
\]
Therefore
\[
F(x_1,\dots,x_m)-F(y_1,\dots,y_m)\le
\max_{1\le j\le m}\left\{ x_j-y_j\right\}.
\]
\end{re}

\begin{re}\label{re-UCP}
If $F$ is an averaging operator then, using (ii) and (iii),
\begin{align*}
F(a,\dots, a, b)&= F(a,\dots,a,a+(b-a))\\
&= a+F(0,\dots,0,b-a)\\
&= a+ (b-a)F(0,\dots,0,1)\\
&=a(1-F(0,\dots,0,1))+bF(0,\dots,0,1)
\end{align*}
for all $a,b\in\R.$
\end{re}

For the proof of the following proposition see \cite{KLW}.

\begin{pro}
	If $F$ is an averaging operator then
	\begin{enumerate}
	\item $F(1-x_1,\dots,1-x_m)=1-F(x_1,\dots,x_m)$
			for all $(x_1,\dots,x_m)\in\R^m;$
	\item There exists $b>0$ such that whenever
	$F(x_1,\dots,x_m)\ge0$ and $\max\{x_1,\dots,x_m\}\le1,$ then
	$\min\{x_1,\dots,x_m\}\ge-b.$
\end{enumerate}
\end{pro}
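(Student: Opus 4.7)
The plan for part (1) is to read off the identity directly from axioms (ii) and (iii). First I would apply (ii) with $t=-1$ to obtain $F(-x_1,\dots,-x_m)=-F(x_1,\dots,x_m)$, and then apply (iii) with $t=1$ and with the arguments $-x_1,\dots,-x_m$ to get $F(1-x_1,\dots,1-x_m)=1+F(-x_1,\dots,-x_m)=1-F(x_1,\dots,x_m)$. There is nothing more to do.

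For part (2), my plan is to reduce to the one-variable slice and exploit the fact that, by the scaling/translation axioms, $F$ becomes \emph{affine} on such a slice. For each index $i\in\{1,\dots,m\}$ let
\[
c_i\defi F(0,\dots,0,1,0,\dots,0)
\]
with the $1$ placed in position $i$, and define $g_i(x)\defi F(1,\dots,1,x,1,\dots,1)$, again with $x$ in position $i$. Applying (iii) with $t=1$ (subtracting $1$ from every coordinate) and then (ii) with $t=x-1$, I would derive the explicit formula
\[
g_i(x)=1+(x-1)c_i, \qquad x\in\R.
\]

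Next I would pin down $c_i$. Taking $x=0$ in the formula and using (iv) (not all coordinates equal) gives $g_i(0)=1-c_i<1$, hence $c_i>0$; and axiom (iv) applied directly to the defining tuple of $c_i$ gives $c_i<\max\{0,\dots,1,\dots,0\}=1$. Thus $0<c_i<1$ for every $i$, and the number
\[
b\defi \max_{1\le i\le m}\frac{1}{c_i}-1
\]
is strictly positive. To finish, given any $(x_1,\dots,x_m)\in\R^m$ with $\max_j x_j\le 1$ and $F(x_1,\dots,x_m)\ge 0$, let $i_0$ be an index where the minimum $x_*\defi\min_j x_j$ is attained; since $x_j\le 1$ for all $j\ne i_0$, monotonicity (v) yields
\[
0\le F(x_1,\dots,x_m)\le F(1,\dots,1,x_*,1,\dots,1)=g_{i_0}(x_*)=1+(x_*-1)c_{i_0}.
\]
Solving for $x_*$ gives $x_*\ge 1-1/c_{i_0}\ge -b$, which is exactly the desired lower bound on $\min_j x_j$.

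The only delicate step is the sign analysis showing $c_i>0$ (if $c_i\le 0$ the formula would make $g_i$ non-decreasing yet exceed $1$ at $x=0$, contradicting either (iv) or (v)); once that is in place, the affine structure on each coordinate slice does all the real work, and the permutation-invariance hypothesis is not needed.
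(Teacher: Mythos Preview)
Your argument is correct. Part (1) follows exactly as you describe from axioms (ii) and (iii), and your part (2) is a clean reduction: the affine formula $g_i(x)=1+(x-1)c_i$ along the $i$th coordinate slice, together with $0<c_i<1$ (both inequalities forced by (iv)), lets monotonicity (v) finish the job with the explicit constant $b=\max_i(1/c_i)-1$.

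As for comparison with the paper: the paper does not actually prove this proposition. It simply states the result and refers the reader to \cite{KLW} for a proof. So there is no in-paper argument to weigh yours against; what you have written is a complete, self-contained proof that the paper itself omits. One small cosmetic point: in your closing parenthetical you write that if $c_i\le 0$ then $g_i$ would be ``non-decreasing'' --- you mean non-increasing (or constant) --- but this aside is redundant anyway, since your direct derivation of $c_i>0$ from $g_i(0)<1$ already settles the matter.
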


In Section \ref{esti} we require,
in addition, for $F$ to be permutation
invariant, that is,
\[
F(x_1,\dots,x_m)= F(x_{\tau(1)},\dots,x_{\tau(m)})
\]
for any permutation $\tau$ of $\{1,\dots,m\}.$

\begin{re}
If $F$ is a permutation invariant averaging operator then we have
that
\[
F(1,0,\dots,0, -1)=F(-1,0,\dots,0,1)=-F(1,0,\dots,0,-1).
\]
Therefore $F(1,0,\dots,0, -1)=0.$
\end{re}

In Section \ref{esti}, we will also need the following assumption:
$F$ is a permutation invariant averaging operator
with the property that there exists $0<\kappa<1$
such that
\begin{equation}\label{Pro}
F(x_1+c,\dots,x_m)\le F(x_1,\dots,x_m)+c\kappa,
\end{equation}
for all $(x_1,\dots,x_m)\in\R^m$ and for all $c>0.$

\begin{re}\label{doblePro} If $F$
is a permutation invariant averaging operator
with the property \eqref{Pro}, then
$$
\begin{array}{l}
    F(x_1+c,x_2+c,x_3,\dots,x_m)\le
	F(x_1,x_2+c,x_3,\dots,x_m) + c\kappa\\
	\qquad =F(x_2+c,x_1,x_3,\dots,x_m) + c\kappa
\le F(x_2,x_1,x_3,\dots,x_m) + 2c\kappa\\
	\qquad = F(x_1,\dots,x_m) + 2c\kappa
\end{array}
$$
for all $(x_1,\dots,x_m)\in\R^m$ and for all $c>0.$
In fact by iterating this argument and using Property (iii), we get
$1<  m \kappa $.
\end{re}

Now we give some examples.

\begin{ex}
This example is taken from \cite{KLW}. For $1<p<+\infty,$
	the operator
	$$F^p(x_1,\dots,x_m)=t$$ from $\R^m$ to $\R$  defined
	implicity by
	\[
	\sum_{j=1}^m (x_j-t)|x_j-t|^{p-2}=0
	\]
	is a permutation invariant averaging operator.
\end{ex}

\begin{ex}
For $0\le\alpha\le1$ and $0<\beta\le1$ with
		$\alpha+\beta=1$, let us consider
		\begin{align*}
			&F_0(x_1,\dots,x_m)=\frac{\alpha}2
			\left(\max_{1\le j\le m}
			\{x_j\}+\min_{1\le j\le m}\{x_j\}
			\right) + \frac{\beta}m\sum_{j=1}^m x_j,\\
			&F_1(x_1,\dots,x_m)=\alpha
			\underset{{1\le j\le m}}{\med}\{x_j\}+
			 \frac{\beta}m\sum_{j=1}^m x_j,\\
			 &F_2(x_1,\dots,x_m)=\alpha
			\underset{{1\le j\le m}}{\med}\{x_j\}+
			 \frac{\beta}2
			 \left(\max_{1\le j\le m}
			\{x_j\}+\min_{1\le j\le m}\{x_j\}
			\right),
		\end{align*}
		where
		\[
\underset{{1\le j\le m}}{\med}\{x_j\}\, \defi
   \begin{cases}
  y_{\frac{m+1}2} & \text{ if }m \text{ is even},  \\
  \displaystyle\frac{y_{\frac{m}2}+ y_{(\frac{m}2 +1)}}2
  & \text{ if }m \text{ is odd},
  \end{cases}
\]
with $\{y_1,\dots, y_m\}$ a nondecreasing rearrangement of
$\{x_1,\dots, x_m\}.$

It holds that $F_0, F_1$ and $F_2$ are permutation invariant
averaging operators. Moreover,
$F_0,$ $F_1$ and $F_2$ satisfy
\eqref{Pro} with $\kappa_0=\nicefrac{\alpha}2
+\nicefrac{\beta}m,$
$\kappa_1=\alpha+\nicefrac{\beta}{m}$
and $\kappa_2=\alpha+\nicefrac{\beta}2,$ due to the fact that
for any $c>0$
\begin{align*}
	\max Y+\min Y
	&\le \max X+\min X
	+ c,\\
	\med Y &\le\med X
	+ c,
\end{align*}
where $Y=\{x_1+c,x_2,\dots,x_m\}$ and
$X=\{x_1,x_2,\dots,x_m\}.$
\end{ex}

\subsection{$F$-harmonic Functions} In this subsection
we will present the definition and some properties of $F$-harmonic
functions.

Let $F$ be an averaging operator.
A function $u\colon\T\to\R$ is called $F$-subharmonic
function if the
inequality
\[
u(x)\le F(u(x,0),\dots,u(x,m-1))
\]
holds for all $x\in\T$ and $F$-superharmonic if the opposite
inequality holds for all $x\in\T.$ We say that $u$ is $F$-harmonic
if $u$ is  both $F$-subharmonic and $F$-superharmonic.

\begin{ex} \label{ex.p.harmonic}
For $1<p<+\infty,$ a function $u\colon\T\to\R$ is $p$-harmonic
if
\[
\sum_{j=0}^{m-1}(u(x,j)-u(x))|u(x,j)-u(x)|^{p-2}=0,
\quad\forall x\in\T
\]
that is
\[
F^p(u(x,0),\dots,u(x,m-1))=u(x),\quad \forall x\in\T.
\]
Thus the $p$-harmonic functions and $F^p$-harmonic functions are the
same.
\end{ex}

\begin{ex} \label{ex.p.harmonious}
A function $u\colon\T\to\R$ is called $(\alpha,\beta)$-harmonious
if
\[
u(x) = \frac{\alpha}2
			\left(\max_{1\le j\le m}
			\{u(x,j)\}+\min_{1\le j\le m}\{u(x,j)\}
			\right) + \frac{\beta}m\sum_{j=1}^m u(x,j),
\quad\forall x\in\T
\]
that is
\[
F_0 (u(x,0),\dots,u(x,m-1))=u(x),\quad \forall x\in\T.
\]
These functions are related to game theory, see \cite{MPR3} for the continuous case and \cite{s-tree,s-tree1}
for trees.
\end{ex}

\begin{re}
Let $F$ be an averaging operator and
 $u$ be a $F$-harmonic function. Then
\begin{enumerate}
\item $au+b$ is a $F$-harmonic function for all $a, b
\in\R;$
\item $u^+=\max\{u,0\}$ and $u^-=\max\{-u,0\}$ are $F$-subharmonic
functions.
\end{enumerate}
\end{re}

Next, we collect some properties.

\begin{lem}Let $F$ be an averaging operator.
If $u$ is a bounded above $F$-subharmonic function and
there exists $x\in\T$ such that $u(x)=\max_{y\in\T} u(y)$ then
$u(y)=u(x)$ for any $y\in\T^x.$
\end{lem}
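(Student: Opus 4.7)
Set $M\defi u(x)=\max_{y\in\T}u(y)$. The plan is to prove by induction on the level that every vertex $y$ in the subtree $\T^x$ satisfies $u(y)=M$. The key input is the strict inequality property (iv) of averaging operators, which forces the values at successors to be constant whenever $F$ attains the maximum.

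First I would record the identity $F(M,\dots,M)=M$: by (i) one has $F(0,\dots,0)=0$, and then applying the translation invariance (iii) with $t=M$ gives $F(M,\dots,M)=M+F(0,\dots,0)=M$. Next I would establish the base step. Since $u$ is $F$-subharmonic at $x$, monotonicity (v) together with $u(x,i)\le M$ for each $i$ yields
\[
M=u(x)\le F(u(x,0),\dots,u(x,m-1))\le F(M,\dots,M)=M,
\]
so equality holds throughout. By (iv), strict inequality would occur unless all the entries $u(x,0),\dots,u(x,m-1)$ are equal; hence they all share a common value $a$. Then, using (ii) and (iii) as in Remark \ref{re-UCP} (or directly: $F(a,\dots,a)=a+F(0,\dots,0)=a$), we get $a=F(u(x,0),\dots,u(x,m-1))=M$. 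Thus $u(x,i)=M$ for every successor.

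For the inductive step, suppose every vertex $y$ in $\T^x$ at level $n$ from $x$ satisfies $u(y)=M$. Apply exactly the same argument to each such $y$: bounded-aboveness gives $u(y,i)\le M$ for all $i$, $F$-subharmonicity gives $u(y)\le F(u(y,0),\dots,u(y,m-1))$, and monotonicity gives $F(u(y,0),\dots,u(y,m-1))\le M$; equality then forces all the $u(y,i)$'s to be equal (by (iv)) and equal to $M$ (by the identity $F(a,\dots,a)=a$). Iterating, $u\equiv M$ on all of $\T^x$.

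The only point requiring a moment of care is the step where equality in the monotonicity bound forces the arguments to be equal: this is exactly where condition (iv) is essential, since the weak monotonicity (v) alone would leave open the possibility of nonconstant successor values. Besides this observation, the argument is a direct discrete analogue of the classical strong maximum principle, and no further technicalities are anticipated.
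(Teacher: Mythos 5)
Your proof is correct and follows essentially the same approach as the paper: both establish that $F$-subharmonicity plus the strict property (iv) forces all successor values to equal the maximum $M$, then propagate down the subtree. The only cosmetic difference is that you derive the upper bound $F(u(x,0),\dots,u(x,m-1))\le M$ via monotonicity (v) and the identity $F(M,\dots,M)=M$, while the paper uses the inequality $F\le\max$ directly; these are equivalent.
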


\begin{proof}
Let $M=u(x)=\max_{y\in\T} u(y)$. We first
observe that it is sufficient to show that $u(y)=M$ for all
$y\in\S(x).$
Since $u$ is a $F$-subharmonic function and $F$ is
an averaging operator, we have that
\[
M=u(x)
\le F(u(x,0),,\dots,u(x,m-1))\le \max_{y\in\S(x)}u(y)\\
\le M.
\]
Then
\[
F(u(x,0),\dots,u(x,m-1))= \max_{y\in\S(x)}u(y)=M.
\]
Therefore, by property (iv), we have that $u(x,i)=M$ for all
$0\le i\le m-1,$ i.e. $u(y)=M$ for all $y\in \S(x).$
\end{proof}

In the same manner, we can prove the following lemma.

\begin{lem}Let $F$ be an averaging operator.
If $u$ is a bounded below $F$-superharmonic function and
there exists $x\in\T$ such that $u(x)=\min_{y\in\T} u(y)$ then
$u(y)=u(x)$ for any $y\in\T^x.$
\end{lem}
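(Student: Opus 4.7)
The statement is exactly the min-version of the preceding lemma, so the plan is to argue by duality. Set $v \defi -u$. First I would verify that $v$ is $F$-subharmonic and bounded above: by property (ii) with $t=-1$ we have $F(-y_1,\dots,-y_m)=-F(y_1,\dots,y_m)$, so the superharmonicity inequality $u(x)\ge F(u(x,0),\dots,u(x,m-1))$ becomes $v(x)\le F(v(x,0),\dots,v(x,m-1))$, and $u$ bounded below is equivalent to $v$ bounded above. Moreover, if $u$ attains its minimum at $x$, then $v$ attains its maximum at $x$. Applying the preceding lemma to $v$ on $\T^x$ gives $v(y)=v(x)$, i.e. $u(y)=u(x)$, for all $y\in\T^x$.

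Alternatively, I would mimic the previous proof directly. It suffices to show that $u(y)=u(x)$ for every $y\in\S(x)$, because then one can iterate down the subtree. Set $\mu\defi u(x)=\min_{y\in\T}u(y)$. Since each $u(x,i)\ge\mu$, the monotonicity axiom (v) together with $F(\mu,\dots,\mu)=\mu$ (which follows from (i) and (iii)) yields
\[
\mu=u(x)\ge F(u(x,0),\dots,u(x,m-1))\ge F(\mu,\dots,\mu)=\mu,
\]
so equality holds throughout.

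It remains to conclude $u(x,i)=\mu$ for every $i$. This is the step where one must bring in axiom (iv), but in its ``dual'' form: arguing as in Remark~2.4 or by applying (ii) with $t=-1$ to axiom (iv), one obtains $F(z_1,\dots,z_m)>\min_j z_j$ whenever the $z_j$ are not all equal. Suppose some $u(x,i_0)>\mu$. Either all $u(x,j)>\mu$, in which case monotonicity gives $F(u(x,0),\dots,u(x,m-1))\ge\min_j u(x,j)>\mu$; or some $u(x,j)=\mu$, in which case the $u(x,j)$ are not all equal and the dual of (iv) gives $F(u(x,0),\dots,u(x,m-1))>\min_j u(x,j)=\mu$. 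Either way we contradict the equality $F(u(x,0),\dots,u(x,m-1))=\mu$ established above.

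The main (very mild) obstacle is just noting that axiom (iv) is phrased in terms of the maximum, so one needs either the duality $v=-u$ or the easy derivation of its min-counterpart from (ii) before the argument closes; everything else is a verbatim translation of the previous proof.
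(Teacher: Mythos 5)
Your proposal is correct. The paper simply remarks that this lemma follows ``in the same manner'' as the preceding one, which is exactly your second (direct) argument; you correctly identify the only nontrivial point, namely that axiom (iv) must be converted into its min-form via homogeneity (ii) with $t=-1$, and your first route (applying the preceding lemma to $v=-u$ after checking that negation swaps sub/superharmonic and min/max) is an equally valid, slightly cleaner packaging of the same idea.
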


If $F$ is an averaging operator then $F$ is a continuous function
and therefore the following result holds.

\begin{lem}\label{lim-unif}
Let $F$ be an averaging operator and $\{u_n\}_{n\in\N}$ be a
sequence of $F$-harmonic functions. If
\[
u(x)=\lim_{n\to+\infty}u_n(x)
\]
for all $x\in\T,$ then $u$ is a $F$-harmonic function.
\end{lem}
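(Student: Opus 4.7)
The plan is straightforward and rests entirely on the continuity of $F$, which is built into the definition of an averaging operator. I would fix an arbitrary vertex $x\in\T$ and use that, by $F$-harmonicity of each $u_n$,
\[
u_n(x)=F(u_n(x,0),\dots,u_n(x,m-1))
\]
for every $n\in\N$. The goal is then to pass to the limit as $n\to+\infty$ on both sides of this identity.

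The left-hand side converges to $u(x)$ by hypothesis. For the right-hand side, pointwise convergence is assumed at every vertex of $\T$, and in particular at each of the finitely many successors $(x,0),\dots,(x,m-1)$. Therefore the vector $(u_n(x,0),\dots,u_n(x,m-1))\in\R^m$ converges to $(u(x,0),\dots,u(x,m-1))$ in $\R^m$, and by continuity of $F\colon\R^m\to\R$,
\[
F(u_n(x,0),\dots,u_n(x,m-1))\longrightarrow F(u(x,0),\dots,u(x,m-1)).
\]
Combining these two limits gives $u(x)=F(u(x,0),\dots,u(x,m-1))$, and since $x$ was arbitrary this says exactly that $u$ is $F$-harmonic.

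There is really no obstacle in this argument: because the mean-value relation at each vertex involves only $m+1$ values, pointwise convergence at finitely many points is all that is needed, and no uniform control on the family $\{u_n\}$ is required (despite the label of the lemma). The only property actually used from the list defining an averaging operator is continuity; the homogeneity, monotonicity, and strict inequality conditions play no role here.
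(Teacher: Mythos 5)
Your argument is correct and is essentially the one the paper has in mind: the paper states just before the lemma that $F$ is continuous and that therefore the result holds, leaving exactly the passage-to-the-limit you spell out as the implicit proof. Your added remark that pointwise convergence suffices (and that uniformity is not needed) is accurate and consistent with the paper's hypothesis.
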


\medskip

The Fatou set $\F(u)$ of a function $u$ is the set of the branches
$\pi=(x_1,\dots, x_k, \dots)$ on which
\[
\lim_{k\to+\infty}u(x_k)
\]
 exists and is finite, and $BV(u)$ is the set of the branches
 $\pi=(x_1,\dots, x_k, \dots)$ on which $u$ has finite variation
 \[
 \sum_{k=1}^{\infty}|u(x_{k+1})-u(x_k)|.
 \]
Clearly $BV(u)\subseteq\F(u).$

In \cite[Theorem A]{KLW}, the authors show that: If $F$ is an averaging
operator and $\mathcal{H}^m_F$ is the set of bounded $F$-harmonic
functions on $\T,$ then
\[
\min_{\mathcal{H}^m_F}\dim \F(u)=\min_{\mathcal{H}^m_F}\dim BV(u)
=\log_m(\tau(m,F)),
\]
where
$$\tau(m,F)=\min\left\{\displaystyle
\sum_{j=1}^m e^{x_j}\colon F(x_1,\dots,x_m)=0\right\}$$ and
$\dim$ denotes the usual Hausdorff dimension.

In \cite{KW}, for the classical $p$-harmonic
functions on trees (Example \ref{ex.p.harmonic}), the authors prove that
\[
\lim_{m\to+\infty} \min_{\mathcal{H}^m_{F^p}}{\dim}\, \F(u)=\lim_{m\to+\infty}\min_{\mathcal{H}^m_{F^p}}{\dim}\, BV(u)=1.
\]
While from \cite{dpmr}, for the $(\alpha,\beta)$-harmonious functions on trees (Example \ref{ex.p.harmonious}),
we have that
\[
\lim_{m\to+\infty} \min_{\mathcal{H}^m_{F_0}}{\dim}\,
\F(u)=\lim_{m\to+\infty}\min_{\mathcal{H}^m_{F_0}}{\dim}\, BV(u)=
\frac{1}2+\frac{\beta}2.
\]

In the case $F=F_1,$
we observe that the minimum  $\tau(m,F_1)$ is attained at
\[
x_i=\begin{cases}
-\frac{m+(1-s)(1-\alpha)}{m}
\log\gamma&\mbox{if } 1\le i\le s-1,\\
\frac{(s-1)(1-\alpha)}{m}\log\gamma &\mbox{if } s\le i \le m,
\end{cases}
\]
where
\[
\gamma=\frac{m+(1-s)(1-\alpha)}{(m-s+1)(1-\alpha)}\quad \mbox{ and } \quad	
s=
\begin{cases}
[\frac{m}{2}]+1& \mbox{ if } m \mbox{ is odd },\\
\frac{m}{2}&  \mbox{ if } m \mbox{ is even }.
\end{cases}
\]
Therefore
\[
\begin{array}{l}
\displaystyle \min_{\mathcal{H}_{F_1}}{\dim}\, \F(u)=\min_{\mathcal{H}_{F_1}}
{\dim}\, BV(u) \\[10pt]
\displaystyle \qquad = \log_m \left(\frac{m}{1-\alpha}
\left(\frac{(m-s+1)(1-\alpha)}{m+(1-s)(1-\alpha)}\right)^{1-\frac
{s-1}{m}(1-\alpha)}\right).
\end{array}
\]

Finally, in the case $F=F_2,$ the minimum $\tau(m,F_2)$ is attained at
\begin{align*}
x_{j}&=\frac{1-\alpha}{2}\log\eta,\, 1\le j\le m-1,
\, x_m=-\frac{1+\alpha}2
\log\eta,&\quad \mbox{if } 0\le\alpha\le\frac{m-2}m,\\
x_{1}&=-\frac{1+\alpha}2
\log\eta,\quad \quad x_j=\frac{1-\alpha}{2}\log\eta,\, 2\le j\le m,
&\quad \mbox{if } \frac{m-2}m<\alpha<1,  	
\end{align*}
where
\[
\eta=\frac{1+\alpha}{(m-1)(1-\alpha)}.
\]
Then
\[
\min_{\mathcal{H}_{F_2}}{\rm dim}\, \F(u)=\min_{\mathcal{H}_{F_2}}{\rm dim}\, BV(u)
=
\log_m\left(\frac{2(m-1)(1+\alpha)^{-\frac{1+\alpha}2}}
{\left((m-1)(1-\alpha)\right)^{\frac{1-\alpha}2}}\right).
\]

Thus, we can compute the following limits as the number of branches go to infinity,
\begin{align*}
&\lim_{m\to+\infty}\min_{\mathcal{H}_{F_1}}{\dim}\, \F(u)=\lim_{m
\to+\infty}\min_{\mathcal{H}_{F_1}}{\dim}\, BV(u)=1,\\
&\lim_{m\to+\infty}\min_{\mathcal{H}_{F_2}}
{\dim}\, \F(u)=\lim_{m\to
+\infty}\min_{\mathcal{H}_{F_2}}{\dim}\, BV(u)= \frac{1}2+\frac
{\alpha}2.
\end{align*}

\subsection{$F$-harmonic Measure}
Let $F$ be an averaging operator, $f:[0,1]\to\R$ be a bounded function,
$c>0$ and $E\subset\partial\T.$

We define $U_F(f,E,c)$ as the set of all $F-$superharmonic functions $u$ such that
\[
\liminf_{k\to+\infty} u(x_k)\ge
f(\pi)+c\CChi_{E}(\pi),
\]
for all $\pi=(x_1, \dots, x_k, \dots)\in\partial\T.$

When $f\equiv 0$ and $c=1,$ we say that $U_F(0,E,1)$ is the upper class of $E$, and
\[
\omega_{F}(x,E)\defi\inf\left\{u(x)\colon u\in U_F(0,E,1)\right\}
\]
is the $F$-harmonic measure function for $E$. We call
$\omega_F(E)\defi
\omega_F(\emptyset,E)$ the $F$-harmonic measure of $E.$

Let  $E$ be a subset of $\partial\T,$ following the arguments in \cite{HKM}, we have that
\begin{enumerate}[(a)]
\item $0\le \omega_F(\cdot, E)\le 1$ on $\T;$
\item  $ \omega_F(E)\le  \omega_F(G)$ when $E\subset G;$
\item  $\omega_F(\cdot, E)$ is $F-$harmonic on $\T;$
\item If $E$ is compact, then $\lim_{k\to+\infty}
\omega_F(x_k, E)=0$ for
$\pi= (x_1, \dots, x_k, \dots)\in \partial\T\setminus E;$
\item If $E$ is compact, then
$\omega_F(E)+\omega_F(\partial\T\setminus E)=1.$
\end{enumerate}

Moreover, if $E$ and $G$ are disjoint compact sets on
$\partial\T$ and $\omega_F(E)=\omega_F(G)=0,$ then
$\omega_F(E\cup G)=0.$ Lastly if
$E_1\supseteq E_2\supseteq
\cdots \supseteq E_j\supseteq\cdots$ are compact sets then
\[
\lim_{j\to+\infty} \omega_F(E_j)=
\omega_F\left(\bigcap_{j=1}^{\infty} E_j\right).
\]

In \cite{KLW}, the authors show that
$F$-harmonious measures on trees lack many desirable
properties of set valued functions find in classical analysis.
More precisely, if $F$ is a permutation invariant averaging
operator, not equal to the usual average, then
$\omega_F$ is not a Choquet capacity,
union of sets of $\omega_F$ measure zero can have positive
$\omega_F$ measure and there exist sets of full $\omega_F$ measure having small dimension. See also \cite{ary}.

\section{The Dirichlet Problem}\label{DP}
We now introduce what we understand by the Dirichlet problem
in this work.

\medskip

\noindent{\bf Dirichlet Problem} (DP). Given an averaging operator $F$
 and a bounded function $f\colon [0,1]\to\R,$ find a $F$-harmonic
 function $u$  such that
\[
\lim_{k\to+\infty} u(x_k)= f(\pi) \quad \forall \pi=(x_1, \dots, x_k, \dots)\in\partial\T.
\]

We say that $u$ is a supersolution of (DP) if $u$ is a
$F$-superharmonic function  and
\[
\liminf_{k\to+\infty} u(x_k)\ge f(\pi)
\quad \forall \pi=(x_1, \dots, x_k, \dots)\in\partial\T.
\]

We say that $u$ is a subsolution of (DP)
if $-u$ is a supersolution of (DP) with boundary data $-f.$

\subsection{Existence}
In this subsection, following \cite[Section 4]{dpmr}, we give a proof
of existence of solutions of (DP) when the boundary data is
a continuous function.

Let $f\colon[0,1]\to\R$ be a bounded function and $n\in\N,$ we
define $f_n:[0,1]\to\R$ as
\[
f_n(t)\defi\sum_{j=0}^{m^n-1}f\left(\nicefrac{j}{m^n}\right)\CChi_{I_{nj}}(t)
\]
where $I_{nj}\defi
\left[\nicefrac{j}{m^n},\nicefrac{(j+1)}{m^n}\right)$
for all $j\in\{0,\dots,m^n-1\}$ and
$I_{n(m^n-1)}\defi \left[\nicefrac{(m^n-1)}{m^n},1\right]$. Note that
this function is piecewise constant.

\medskip

Our next goal is to construct a $F$-harmonic function $u_n$
such that $u_n(x)=f_n(x)$
for all $x\in\T^k$ for any $k\ge n.$

We first observe that, for all
$j\in\{0,\dots,m^n-1\}$  there exists  $x_{nj}\in\T^n$ such that
$I_{x_{nj}}=\overline{I_{nj}}.$ Then, for all
$k\in\{1,\dots,n\}$, we take
$\{x_{(n-k)j}\}_{j=0}^{m^{n-k}-1}\subset\T$ such that
\[
\S(x_{(n-k)j})=\{x_{(n-k+1)\tau}\colon 1+(j-1)m\le\tau\le jm\} \quad\forall j\in\{0,\dots,m^{n-k}-1\}.
\]

Let $u_n\colon\T\to\R$ be such that
\begin{equation}\label{river.campeon}
 u_n(y)\defi f\left(\nicefrac{j}{m^n}\right),
\end{equation}
 for all $y\in\T^{x_{nj}}$  for some ${j\in\{1,\dots,m^n-1\}},$
 and
\[
u_n(x_{(n-k)j})\defi F(u_n(x_{(n-k)j},0),\dots,u_n(x_{(n-k)j},m-1)),
\]
for any $k\in\{1,\dots,n\}$ and for all $j\in\{0,\dots,m^{n-k}-1\}$.

It is easy to check that
$u_n$ is a $F$-harmonic function. Moreover,
 $\{u_n\}_{n\in\N}$ is uniformly bounded on
$\T$ due to the fact that $f$ is bounded.

\begin{re}\label{cu}
Let $f$
be a continuous function on $[0,1].$ Then, given $\varepsilon>0$
there exists $\delta=\delta(\varepsilon)>0$ such that
\[
|f(x)-f(y)|\le \frac{\varepsilon}{2}+\frac{2\|f\|_{\infty}}{\delta}|x-y|,
\]
for all $x,y\in[0,1].$ Therefore, for any $n\in\N$ and
$j\in\{0,\dots,m^n-1\}$ we have that
\[
|f_n(x)-f(y)|\le \frac{\varepsilon}2+
\frac{2\|f\|_{\infty}}{\delta m^n},
\]
for all $x, y\in I_{nj}.$ Then $\{f_n\}_{n\in\N}$ converges
uniformly to $f.$
\end{re}

We are now ready to state our existence result for the Dirichlet problem.

\begin{teo}\label{edp1}
Let $F$ be an averaging operator and
$f:[0,1]\to\R$ be a continuous function. Then the sequence
$\{u_n\}_{n\in\N}$ converges uniformly to a solution $u$ of
\textnormal{(DP)} with boundary data $f.$ Moreover,
if $f$ is a Lipschitz
function we have a bound for the error, it holds that
\[
|u_{n}(x)-u(x)|\le \frac{ L}{m^n},
\]
for all $x\in\T,$ where $L$ is the Lipschitz constant of $f.$
\end{teo}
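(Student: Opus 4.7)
The plan is to show that $\{u_n\}_{n\in\N}$ is uniformly Cauchy on $\T$, pass to the limit via Lemma \ref{lim-unif}, and verify the boundary condition. The main tool is the Lipschitz-type estimate of Remark \ref{Fmax}, namely
\[
F(x_1,\dots,x_m) - F(y_1,\dots,y_m) \le \max_{1\le j \le m}(x_j - y_j),
\]
combined with the uniform continuity of $f$ on $[0,1]$. Set
\[
\omega_n \defi \sup\bigl\{|f(s)-f(t)| : s,t \in [0,1],\ |s-t| \le \tfrac{1}{m^n}\bigr\},
\]
so $\omega_n \to 0$ by uniform continuity.

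Fix $n < k$. On each subtree $\T^{x_{nj}}$, by \eqref{river.campeon} one has $u_n \equiv f(j/m^n)$, while $u_k$ is $F$-harmonic on $\T^{x_{nj}}$ and takes the values $f(j'/m^k)$ at its level-$k$ vertices, with each such $j'/m^k$ lying in $I_{nj}$. Hence the boundary values of $u_k$ on $\T^{x_{nj}}$ at level $k$ lie in the interval $J\defi[f(j/m^n)-\omega_n,\,f(j/m^n)+\omega_n]$. The first step is to observe that $F$ maps $J^m$ into $J$: by axioms (i)--(iii) one has $F(c,\dots,c)=c$ for every $c\in\R$, and axiom (v) then forces $F$ to preserve bounds. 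Iterating this fact from level $k$ up to level $n$ yields $u_k(y) \in J$ for every $y \in \T^{x_{nj}}$ at level between $n$ and $k$, and therefore
\[
|u_n(y) - u_k(y)| \le \omega_n \qquad \text{for every } y \in \T \text{ at level } \ge n.
\]

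To propagate the estimate to levels below $n$, apply Remark \ref{Fmax} recursively: for $x$ at level $\ell < n$ one has
\[
u_n(x) - u_k(x) \le \max_{0\le i\le m-1}\bigl(u_n(x,i) - u_k(x,i)\bigr),
\]
and the symmetric bound holds for $u_k-u_n$. Iterating $n-\ell$ times reduces the comparison to level $n$, where the previous step gives the bound $\omega_n$. Consequently $\|u_n - u_k\|_\infty \le \omega_n$, so $\{u_n\}_{n\in\N}$ is uniformly Cauchy on $\T$ and converges uniformly to a bounded function $u$, which is $F$-harmonic by Lemma \ref{lim-unif}.

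For the boundary condition, fix $\pi=(x_1,x_2,\dots)\in\partial\T$ and $\varepsilon>0$; choose $n$ with $\omega_n < \varepsilon/2$. For any $k \ge n$, the vertex $x_k$ lies in some $\T^{x_{nj}}$ with $|j/m^n - \psi(\pi)| \le 1/m^n$. Letting $k' \to \infty$ in the Cauchy estimate gives $\|u_n - u\|_\infty \le \omega_n$, so
\[
|u(x_k)-f(\pi)| \le |u(x_k)-u_n(x_k)| + |f(j/m^n) - f(\pi)| \le 2\omega_n < \varepsilon,
\]
which gives $\lim_{k\to\infty} u(x_k) = f(\pi)$. Finally, when $f$ is $L$-Lipschitz one has $\omega_n \le L/m^n$, and letting $k\to\infty$ in $\|u_n-u_k\|_\infty \le \omega_n$ produces the stated error bound $|u_n(x) - u(x)| \le L/m^n$. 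The principal subtlety is the invariant-interval step, but it follows cleanly from axioms (i)--(iii), (v) once one recognises that constants are $F$-harmonic.
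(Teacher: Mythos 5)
Your proof is correct and follows essentially the same strategy as the paper's: estimate $|u_n - u_k|$ at levels $\geq n$ using the continuity of $f$ on dyadic-scale intervals, propagate the bound downward to the root via the monotonicity and translation-invariance of $F$ (Remark \ref{Fmax}), and deduce the boundary condition from uniform convergence. Your version is slightly cleaner in that it packages the continuity estimate as a modulus $\omega_n$ and uses an invariant-interval observation to handle all levels $\geq n$ at once, which in particular makes the Lipschitz error bound immediate, whereas the paper reaches the same conclusion through the $\varepsilon$-$\delta$ bound of Remark \ref{cu} and a level-by-level iteration; the underlying mechanism is identical.
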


\begin{proof} The proof is divided into 3 steps.

\noindent{\bf Step 1}. First, we prove that
$\{u_n\}_{n\in\N}$ is an uniformly Cauchy sequence.
Let $h,k,n\in\N$ and $x\in\T^h.$ If $n\le k\le h,$
there exist $i\in\{0,\dots,m^n-1\}$ and $j\in\{0,\dots,m^k-1\}$
such that  $u_n(x)=f_n(x)=f\left(\nicefrac{i}{m^n}\right)$ and
$u_k(x)=f_k(x)=f\left(\nicefrac{j}{m^k}\right).$ Moreover,
$I_x\subset I_{x_{kj}}\subset I_{x_{ni}}.$ Then, given
$\varepsilon>0,$ by Remark \ref{cu}, there exists
$\delta=\delta(\varepsilon)>0$ such that
\[
|u_n(x)-u_k(x)|\le\frac{\varepsilon}2
+ \frac{2\|f\|_{\infty}}{\delta m^n},
\quad\forall x\in\T^h.
\]
Thus, there exists $n_0$ such that if
$k\ge n\ge n_0,$
\[
|u_n(x)-u_k(x)|\le\varepsilon, \quad \forall x\in\T^h.
\]
Then for any $x\in\T^{h-1},$ by the above inequality,
\[
u_k(y)-\varepsilon\le u_n(y)\le u_k(y)+\varepsilon,\quad\forall
y\in\S(x).
\]
Therefore, since $u_n$ is a $F$-harmonic function and using (iii)
and (v), we have
\[
u_k(x)-\varepsilon\le u_n(x)\le u_k(x)+\varepsilon\quad\forall
x\in\T^{h-1},
\]
that is,
\[
|u_n(x)-u_k(x)|\le\varepsilon \quad\forall
x\in\T^{h-1}.
\] 	

In the same manner, in $(h-1)$-steps, we can see
that	
\[
|u_n(x)-u_k(x)|\le\varepsilon \quad\forall
x\in\T.
\] 	
Therefore $\{u_n\}_{n\in\N}$ is an uniformly Cauchy sequence.

\noindent{\bf Step 2}. Now we show that
\[
u(x)\defi \lim_{n\to+\infty} u_n(x) \quad \forall x\in\T
\]
is a solution of (DP) with boundary data $f.$
By step 1, $\{u_n\}_{n\in\N}$ converges uniformly to $u.$ Therefore,
by Lemma \ref{lim-unif}, $u$ is a $F$-harmonic function. Then,
we only need to show that
\[
\lim_{k\to +\infty}u(x_k)=f(\pi)\quad \forall
\pi=(x_1,\dots,x_k,\dots)\in\partial\T.
\]

Let  $\pi=(x_1,\dots,x_k,\dots)\in\partial\T$ and
$\varepsilon>0.$ Since
$\{u_n\}_{n\in\N}$ converges uniformly to $u,$ there exists
$n_0=n_0(\varepsilon)$ such that
\begin{equation}
		\label{desconunif1}|u_n(x_j)-u(x_j)|\le\frac{\varepsilon}2,
		\quad\forall j\in\N
\end{equation}
if $n\ge n_0.$

On the other hand, we can observe that there exists
$n_1=n_1(\varepsilon)$ such that
\[
|f_n(\pi)-f(\pi)|\le\frac{\varepsilon}2
\]
if $n\ge n_1.$ Then, since $u_n(x)=f_n(x)$ for all $x\in\T^h$
for any $h\ge n,$ if $n\ge n_1$ we have that
\begin{equation}
		\label{desconunif2}|u_n(x_j)-f(\pi)|
		\le\frac{\varepsilon}2 \quad\forall j\ge n.
\end{equation}

Finally, taking $n\ge\max\{n_0,n_1\}$ and $j\ge n,$ by
\eqref{desconunif1} and \eqref{desconunif2}, we get
\[
|u(x_j)-f(\pi)|\le|u(x_j)-u_n(x_j)|+|u_n(x_j)-f(\pi)|\le
\varepsilon.
\]

\noindent{\bf Step 3.}
We observe that if $f$ is a Lipschitz function, in the same manner
as in step~1, we obtain that, if $k,n\in\N,$
\[
|u_n(x)-u_k(x)|\le \frac{L}{m^n} \quad\forall x\in\T.
\]
Therefore,
\[
|u_n(x)-u(x)|\le \frac{L}{m^n} \quad\forall x\in\T,
\]
where $L$ is the Lipschitz constant of $f.$ This completes the proof.
\end{proof}

\subsection{Uniqueness} The comparison principle and the uniqueness
of our Dirichlet problem follow immediately from the following
lemma.

\begin{lem}\label{cotasupdp1}
	Let $F$ be an averaging operator and
	$f,g\colon [0,1]\to\R$ be bounded functions. If $u$ is
	a subsolution of \textnormal{(DP)} with boundary data $f$ and
	$v$ is a supersolution of \textnormal{(DP)} with boundary data
	$g$ then
	\[
	\sup_{x\in\T} \left\{u(x)-v(x)\right\}\le
	\sup_{x\in[0,1]} \left\{f(x)-g(x)\right\}.
	\]
\end{lem}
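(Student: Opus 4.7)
The plan is to work with $w(x) \defi u(x) - v(x)$ on $\T$ and show the supremum cannot exceed $M \defi \sup_{t\in[0,1]}(f(t)-g(t))$, which is finite since $f$ and $g$ are bounded. The key identity at each vertex comes from combining the sub/superharmonic inequalities with the monotonicity estimate in Remark~\ref{Fmax}: since $u(x) \le F(u(x,0),\dots,u(x,m-1))$ and $v(x) \ge F(v(x,0),\dots,v(x,m-1))$, subtracting gives
\[
w(x) \le F(u(x,0),\dots,u(x,m-1)) - F(v(x,0),\dots,v(x,m-1)) \le \max_{0\le j\le m-1} w(x,j).
\]
So $w$ is ``subharmonic in the maximum sense'' along successors.

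Next I argue by contradiction. Set $S \defi \sup_{x\in\T} w(x)$ (a priori $S$ could be $+\infty$) and suppose $S > M$. Choose any finite number $\lambda$ with $M < \lambda < S$ (when $S=+\infty$ just pick $\lambda = M+1$). By definition of $S$ there is a vertex $x_{k_0}\in\T$ with $w(x_{k_0}) > \lambda$. Using the maximum inequality iteratively, one picks $x_{k_0+1}\in\S(x_{k_0})$ with $w(x_{k_0+1}) \ge w(x_{k_0}) > \lambda$, then $x_{k_0+2}\in\S(x_{k_0+1})$, and so on, producing an infinite branch $\pi=(x_1,\dots,x_k,\dots)\in\partial\T$ (the initial segment up to $x_{k_0}$ being the unique ancestor chain) along which $w(x_k) > \lambda$ for every $k\ge k_0$.

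Finally, I combine this with the boundary behaviour. Unraveling the definitions, ``$u$ is a subsolution with data $f$'' means $-u$ is a supersolution with data $-f$, hence $\limsup_{k\to\infty} u(x_k) \le f(\pi)$, while $v$ being a supersolution with data $g$ yields $\liminf_{k\to\infty} v(x_k) \ge g(\pi)$. From $u(x_k) > v(x_k) + \lambda$ for all large $k$ I take $\liminf$ on both sides to obtain
\[
\liminf_{k\to\infty} u(x_k) \ge \liminf_{k\to\infty} v(x_k) + \lambda \ge g(\pi) + \lambda,
\]
and combining with $\liminf u(x_k) \le \limsup u(x_k) \le f(\pi)$ gives $f(\pi) - g(\pi) \ge \lambda > M$, contradicting the definition of $M$. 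Hence $S\le M$, which is the desired inequality.

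The only delicate point is the branch construction when the supremum is not attained (and in particular when $S=+\infty$); picking an auxiliary value $\lambda$ strictly between $M$ and $S$ and iterating the successor-max inequality handles both cases uniformly, and the passage to $\liminf$/$\limsup$ on the boundary is clean because $f$ and $g$ are bounded.
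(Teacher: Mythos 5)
Your proof is correct and follows essentially the same route as the paper: derive $u(x)-v(x)\le\max_{j}\{u(x,j)-v(x,j)\}$ from Remark~\ref{Fmax}, iterate along maximizing successors to build a branch, and then play the $\limsup$ of $u$ against the $\liminf$ of $v$ at the boundary. The only cosmetic difference is that you phrase it as a contradiction with an intermediate value $\lambda$ (and explicitly treat $S=+\infty$), whereas the paper works directly with $M-\varepsilon$ and lets $\varepsilon\to 0$; the underlying mechanism is identical.
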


\begin{proof} Let $M=\sup_{x\in\T}
\left\{u(x)-v(x)\right\}.$ Then,  given $\varepsilon>0$ there exists
$x_{k_0}\in\T$ such that
\begin{align*}
	M-\varepsilon&\le u(x_{k_0})-v(x_{k_0})\\
	&\le F(u(x_{k_0},0),\dots,u(x_{k_0},m-1))-
	F(v(x_{k_0},0),\dots,v(x_{k_0},m-1)).
\end{align*}
Using Remark \ref{Fmax}, we get
\[
M-\varepsilon\le \max_{y\in\S(x_{k_0})}\left\{u(y)-v(y)\right\}.
\]

Thus, taking $x_{k_1}\in\S(x_{k_0})$ such that
$\displaystyle \max_{y\in\S(x_{k_0})}\left\{u(y)-v(y)\right\}=
u(x_{k_1})-v(x_{k_1}),$ we have that
\[
M-\varepsilon\le u(x_{k_1})-v(x_{k_1}).
\]
Continuing this reasoning, we obtain by induction that for all
$j \ge 1$
there exists $x_{k_j}\in\S(x_{k_{j-1}})$ such that
\begin{equation}\label{cotaeps}
	M-\varepsilon\le u(x_{k_j})-v(x_{k_j}).
\end{equation}

Now, since
\[
\limsup_{k\to+\infty}u(x_k)\le f(\pi) \quad\mbox{ and } \quad
\liminf_{k\to+\infty}v(x_k)\ge g(\pi)
\]
for all $\pi=(x_1,\dots,x_k,\dots)\in\partial\T,$
by \eqref{cotaeps}, we have that
\[
M-\varepsilon\le f(\pi_0)-g(\pi_0)
\]
where $\pi_0=(x_{k_0},\dots,x_{k_j},\dots).$ Therefore
\[
M-\varepsilon\le \sup_{x\in[0,1]}\left\{f(x)-g(x)\right\}.
\]
Since $\varepsilon$ is arbitrary, the proof is complete.
\end{proof}

The above lemma implies the comparison principle for solutions of
(DP).

\begin{teo}[Comparison Principle]\label{cpdp1}
Let $F$ be an averaging operator and
 $f,g\colon[0,1]\to\R$ be bounded functions. If $v$ is a
supersolution (resp. subsolution) of
\textnormal{(DP)} with boundary data $g$,
$u$ is a solution of
\textnormal{(DP)} with boundary data $f$ and $f\le g$
(resp. $f\ge g$), we have that $u\le v$ (resp. $u\ge v$).
\end{teo}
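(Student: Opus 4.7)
The plan is to derive the Comparison Principle as an immediate consequence of Lemma \ref{cotasupdp1}, which does essentially all of the work. The key observation is that any solution of (DP) is simultaneously a subsolution and a supersolution, so either hypothesis of Lemma \ref{cotasupdp1} is already available to us.

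For the first assertion, I would set $u$ to be the given solution of (DP) with data $f$, regarded as a subsolution, and take $v$ as the supersolution with data $g$. Applying Lemma \ref{cotasupdp1} yields
\[
\sup_{x\in\T}\{u(x)-v(x)\} \;\le\; \sup_{t\in[0,1]}\{f(t)-g(t)\} \;\le\; 0,
\]
where the last inequality uses the hypothesis $f\le g$. Hence $u(x)\le v(x)$ for every $x\in\T$.

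For the parenthetical case, I would exchange the roles: now $v$ is a subsolution with data $g$ and $u$, still a solution of (DP) with data $f$, is viewed as a supersolution. Lemma \ref{cotasupdp1} then gives
\[
\sup_{x\in\T}\{v(x)-u(x)\} \;\le\; \sup_{t\in[0,1]}\{g(t)-f(t)\} \;\le\; 0,
\]
since now $f\ge g$, so $u\ge v$ on $\T$.

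There is no real obstacle here; the whole point of the preceding lemma was to factor out the dynamic content of the comparison argument (the descent along branches using Remark \ref{Fmax} and the boundary liminf/limsup conditions). The theorem statement is simply the formal repackaging of that estimate as a monotonicity principle, so the proof reduces to checking that the hypotheses of Lemma \ref{cotasupdp1} are satisfied in each of the two cases and invoking it.
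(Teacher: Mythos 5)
Your proposal is correct and follows exactly the route the paper intends: the paper itself just remarks that Lemma \ref{cotasupdp1} implies the theorem, and your two applications of that lemma (using that a solution is both a sub- and a supersolution) are the straightforward unpacking of that remark.
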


Now, we arrive to the main result of this section.

\begin{teo} Let $F$ be an averaging operator and
 $f\colon[0,1]\to\R$ be a bounded function.
	There exists a unique bounded solution of \textnormal{(DP)}
	 with boundary data $f.$
\end{teo}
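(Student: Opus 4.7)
The plan is to address uniqueness and existence separately.

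Uniqueness follows at once from Lemma \ref{cotasupdp1}. If $u$ and $v$ are two bounded solutions of (DP) with the same boundary data $f$, then each of them is simultaneously a subsolution and a supersolution of (DP) with data $f$. Applying the lemma with $g=f$ first to the pair $(u,v)$ and then to $(v,u)$ yields
\[
\sup_{\T}(u-v)\le\sup_{[0,1]}(f-f)=0
\quad\text{and}\quad
\sup_{\T}(v-u)\le 0,
\]
hence $u\equiv v$ on $\T$.

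For existence, the plan is to approximate $f$ by continuous functions and invoke Theorem \ref{edp1}. Given a bounded $f\colon[0,1]\to\R$, I would construct a sequence of continuous functions $\{f_n\}$ approaching $f$ in a controlled way and, for each $n$, use Theorem \ref{edp1} to produce an $F$-harmonic solution $u_n$ of (DP) with boundary data $f_n$. The key stability estimate supplied by Lemma \ref{cotasupdp1} is
\[
\|u_n-u_m\|_\infty\le\|f_n-f_m\|_\infty,
\]
so a uniformly Cauchy $\{f_n\}$ produces a uniformly Cauchy $\{u_n\}$ whose limit $u$ is $F$-harmonic by Lemma \ref{lim-unif}, bounded by $\|f\|_\infty$, and satisfies the required boundary condition via the triangle inequality together with the boundary condition for each $u_n$.

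The main obstacle will be handling bounded $f$ that are not uniform limits of continuous functions; in that case the previous argument breaks down. I would then switch to monotone approximations from above and below together with Perron-type envelopes $\overline{u}(x)=\inf\{v(x)\}$ and $\underline{u}(x)=\sup\{v(x)\}$ taken over bounded super- and subsolutions of (DP) respectively, using Lemma \ref{lim-unif} to preserve $F$-harmonicity under monotone limits and the comparison principle (Theorem \ref{cpdp1}) to control the gap between the two envelopes. Verifying that the envelopes coincide and realize $f(\pi)$ pointwise along every branch is where the averaging property of $F$ and the shrinking geometry of the subtrees $\T^{x_k}$ (as $k\to\infty$ the interval $I_{x_k}$ contracts to the point $\psi(\pi)$) must be used decisively.
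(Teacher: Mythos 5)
Your uniqueness argument is exactly the paper's: two applications of Lemma~\ref{cotasupdp1} (with $g=f$, to the pair $(u,v)$ and then to $(v,u)$) give $\sup_\T(u-v)\le 0$ and $\sup_\T(v-u)\le 0$, hence $u\equiv v$. No complaints there.

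For existence you have, in effect, found a defect in the paper's own proof. The paper's proof is a single sentence citing Theorem~\ref{edp1}, but Theorem~\ref{edp1} produces a solution only when $f$ is \emph{continuous}; it has no content for a merely bounded $f$. The word ``bounded'' in the statement is almost certainly a slip for ``continuous'': every other result in the paper that invokes the solution of (DP) (Theorem~\ref{teoestimacion}, Lemma~\ref{vinf}, Lemmas~\ref{1int}--\ref{2int}, Corollary~\ref{BCP}) assumes continuity. You correctly observe that uniform approximation by continuous functions cannot bridge the gap, since $C([0,1])$ is already closed in the sup norm, and your Perron-envelope fallback is only a sketch: the decisive claim --- that the upper and lower envelopes agree and attain $f(\psi(\pi))$ along every branch $\pi$ --- is exactly what would need to be proved, and for genuinely discontinuous bounded $f$ it is in general false (e.g.\ a boundary datum with no limit along some branch admits no $F$-harmonic extension in the sense of (DP)). Read with the intended hypothesis of continuity, existence is an immediate citation of Theorem~\ref{edp1}, and both your approximation step and the (correct but now superfluous) stability estimate $\|u_n-u_m\|_\infty\le\|f_n-f_m\|_\infty$ drop out; your proof then coincides with the paper's.
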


\begin{proof} Theorem \ref{edp1}
gives a solution of (DP)
and the comparison principle implies the uniqueness.
\end{proof}

\begin{re}
	Observe that the sequence $\{u_n\}_{n\in\N}$ given by \eqref{river.campeon} converges uniformly
	to the unique
	solution of  \textnormal{(DP)}.
\end{re}

\section{$F$-harmonic Measure Estimates}\label{esti}
In this section we give some estimates for $F$-harmonic measures. First
we introduce some definitions.

\begin{de}\label{defiv}
Let $F$ be an averaging operator, $f:[0, 1]\to\R$ be a continuous
function, $u: \T\to\R$  the solution of \textnormal{(DP)} with boundary data $f,$
 $c>0$ and $n\in\mathbb{N}.$ Given
 $I={\bigcup_{j=k_0}^{k_1}}\left[\frac{j}{m^n},
 \frac{j+1}{m^n}\right]$
 with $0\le k_0\le k_1\le m^n-1,$ we define
 $\vv=\mathfrak{v}: \T\to\R$ by
\[
\mathfrak{v}(x)\!=\!
\begin{cases}
u(x)+c &\mbox{if } x\in K ,\\
u(x)&\mbox{if }x\in\T^y\mbox{ with } y\in\T^n\setminus \{x_j\}_{j=k_0}^{k_1},\\
F(\mathfrak{v}(x,0), \dots,\mathfrak{v}(x,m-1)) &\mbox{if } x\in\T^j, 0\le j\le n-1,	
\end{cases}
\]
where  $\{x_j\}_{j=k_0}^{k_1}$ is the unique set of
vertices of $\T^n$ such that
$I={ \bigcup_{j=k_0}^{k_1}} I_{x_j}$ and $K ={ \bigcup_{j=k_0}^{k_1}} \T^{x_j}.$
\end{de}

\begin{re}
The function $\vv$ is  $F$-harmonic.
\end{re}

We now prove some technical results.

\begin{lem}\label{vinf}
Let $F$ be a permutation invariant averaging operator
with the pro\mbox{per}ty \eqref{Pro}, $f:[0, 1]\to\R$ be a continuous
function and $c>0.$  If
 $I={
 \bigcup_{j=k_0}^{k_1}}\left[\frac{j}{m^n},  \frac{j+1}{m^n}
 \right]$
 with $0\le k_0\le k_1\le m^n-1$ and $k_0+k_1\neq m^n-1,$
then
\begin{equation}\label{infenv}
\vv(\emptyset)= \inf\left\{ w(\emptyset)\colon w\in U_F(f,I,c)\right\}.
\end{equation}
\end{lem}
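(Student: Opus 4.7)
The plan is to establish the two inequalities separately. For ``$\le$'', I would check that $\vv$ is a subsolution of the Dirichlet problem with boundary data $\tilde f = f + c\CChi_I$: indeed $\vv$ is $F$-harmonic by Definition \ref{defiv}, and the boundary limits along any branch $\pi=(x_1,x_2,\ldots)$ follow directly from a case analysis of the construction --- $\vv(x_k)\to f(\psi(\pi))+c$ when $\psi(\pi)$ lies in the interior of $I$ (eventually $x_k\in K$); $\vv(x_k)\to f(\psi(\pi))$ when $\psi(\pi)\notin I$ (eventually $x_k\in\T^y$ for some $y\in\T^n\setminus\{x_j\}$); and at the two endpoints of $I$ either value can occur depending on which representation of $\pi$ is used. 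In all cases $\limsup_k\vv(x_k)\le\tilde f(\psi(\pi))$. Since every $w\in U_F(f,I,c)$ is $F$-superharmonic with $\liminf_k w(x_k)\ge\tilde f(\psi(\pi))$, i.e.\ a supersolution with the same data $\tilde f$, Lemma \ref{cotasupdp1} yields $\sup_{\T}(\vv-w)\le 0$, so $\vv\le w$ on $\T$ and in particular $\vv(\emptyset)\le w(\emptyset)$.

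For the reverse inequality I would construct a minimizing sequence by slightly enlarging $I$. For each integer $N>n$, let
\[
I_N = I\cup\bigl[\tfrac{k_0}{m^n}-\tfrac{1}{m^N},\,\tfrac{k_0}{m^n}\bigr]\cup\bigl[\tfrac{k_1+1}{m^n},\,\tfrac{k_1+1}{m^n}+\tfrac{1}{m^N}\bigr]
\]
(omitting whichever flanking interval would fall outside $[0,1]$), and let $\vv_N$ be the function furnished by Definition \ref{defiv} applied at level $N$ with $I_N$ in place of $I$. Because $I\subset I_N^{\circ}$, both branch representations of any $\pi$ with $\psi(\pi)\in I$ eventually enter the enlarged ``plus $c$'' region at level $N$, so $\liminf_k\vv_N(x_k)=f(\psi(\pi))+c$; and for $\psi(\pi)\notin I$ the monotonicity of the construction, together with $\vv_N\ge u$, gives $\liminf_k\vv_N(x_k)\ge f(\psi(\pi))$. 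Hence $\vv_N\in U_F(f,I,c)$.

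What remains is the quantitative claim $\vv_N(\emptyset)-\vv(\emptyset)\to 0$, which together with the first step forces equality. The plan is a level-by-level contraction estimate for $\vv_N-\vv$: at level $N$ this difference equals $c$ on exactly the two extra vertices $x^N_{\mathrm{left}},x^N_{\mathrm{right}}$ adjoined in the construction of $I_N$, and vanishes at every other level-$N$ vertex. At any parent having exactly one child with a positive difference $\delta$ (all other children in agreement), property \eqref{Pro} together with the permutation invariance of $F$ yields a difference at the parent of at most $\kappa\delta$. Iterating, at level $\ell$ the difference is $\le c\kappa^{N-\ell}$ on at most two vertices, for every $\ell$ above the first common ancestor level $\ell^*$ of the two special chains. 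At $\ell^*$, applying \eqref{Pro} to both special coordinates simultaneously (Remark \ref{doblePro}) gives a difference $\le 2c\kappa^{N-\ell^*}$ at that single vertex, and a further $\ell^*$ single-child contractions down to $\emptyset$ produce $0\le\vv_N(\emptyset)-\vv(\emptyset)\le 2c\kappa^N$. Since $0<\kappa<1$, this tends to $0$, giving $\inf\{w(\emptyset)\}\le\vv(\emptyset)$ and completing the argument. The main obstacle is the bookkeeping of this propagation --- in particular, identifying the merge level $\ell^*$ and switching exactly once from the single-child bound to the Remark \ref{doblePro} double bound, which is where the permutation invariance of $F$ is crucially used so that \eqref{Pro} applies to an arbitrary coordinate.
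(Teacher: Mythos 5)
Your proposal is correct and follows the paper's own argument almost step for step: the ``$\le$'' direction via comparison with the data $f+c\CChi_I$, and the ``$\ge$'' direction by enlarging $I$ by flanking intervals of length $m^{-N}$, noting the resulting $\vv_N$ lies in $U_F(f,I,c)$, and propagating the discrepancy from level $N$ to $\emptyset$ with \eqref{Pro}, switching once to Remark \ref{doblePro} at the merge level. The only cosmetic differences are that you invoke Lemma \ref{cotasupdp1} globally in place of the paper's subtree-by-subtree comparison, and you fold the paper's three cases ($k_0=0$, $k_1=m^n-1$, interior) into one by dropping an out-of-range flanking interval.
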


\begin{proof}
Let $\{x_j\}_{j=k_0}^{k_1}\subset \T^n$ such that
\[
I=\bigcup_{j=k_0}^{k_1} I_{x_j}.
\]

Given $w\in U_F(f,I,c),$ we have that:
\begin{itemize}
\item For each $j\in\{k_0,k_0+1,\dots,k_1\},$ $\vv$ and $w$
are the solution and a supersolution of (DP) on $\T^{x_j}$
 with boundary data $f+c\Chi,$ respectively;
\item For any $z\in\T^n\setminus\{x_j\}_{j=k_0}^{k_1},$
$\vv$ and $w$ are the solution and a supersolution of (DP) on $\T^{z}$
 with boundary data $f,$ respectively.
\end{itemize}
 Thus, by the comparison principle,
\[
\vv(x)\le w(x), \quad \forall x\in\T^y
\]
for any $y\in\T^n.$
Therefore, using that $\vv$ is $F$-harmonic, $w$ is
$F$-superharmonic and (v),
we have that
\[
\vv(x)\le w(x),
\]
for all $x\in\T.$ In particular
\[
\vv(\emptyset)\le w(\emptyset).
\]

Since $w\in U_F(f,I,c)$ is arbitrary, we obtain that
\[
\vv(\emptyset)\le \inf\left\{ w(\emptyset)\colon w\in U_F(f,I,c)\right\}.
\]

To prove the opposite inequality, we will construct a sequence
$\{w_l\}_{l\in\N}\subset U_F(f,I,c)$ such that
\[
\vv(\emptyset)= \lim_{l\to+\infty}w_l(\emptyset).
\]

To this end, we need to study three cases.

\noindent{\bf Case 1.} First we study the case $k_0=0.$

Let $l\in\N.$ We define $I_l\defi
\left[\frac{k_{1}+1}{m^n}, \frac{k_{1}+1}{m^n}+\frac{1}{m^{n+l}}
\right]$ and
$w_l(x)\defi \mathfrak{v}_{(f,I\cup I_l,c)}(x).$
Observe that $w_l$ is a $F$-harmonic function such
that
\[
\liminf_{k\to+\infty}w_l(x_k)\ge f(\pi)+c\Chi(\pi)
\quad \forall \pi=(x_1, \dots, x_k, \dots)\in\partial
\T.
\]
Then $w_l\in U_F(f,I,c)$ for all $l\in\N.$
Moreover, by (v), $\{w_l\}_{l\in\N}$
is a nonincreasing sequence and
\[
w_l(x)\ge \vv(x)
\]
for all $x\in\T$ and $l\in\N.$

Finally, we will prove the following inequality
\begin{equation}\label{desvw1}
\vv(\emptyset)\le w_l(\emptyset)\le \vv(\emptyset)+c\kappa^{n+l}
\end{equation}
for all $l\in\N.$

Let $l\in\N$ and $z_1\in\T^{n+l-1}$ such that
$I_l=I_{z_0}$ where $z_0=(z_{1},0).$
Then, for any $x\in\T^{n+r}$ with $r\in\N_{l},$ we have that
\[
w_l(x)=
\begin{cases} \vv(x)& \mbox{if } x\in\T^y \mbox{ with } y\in\T^{n+l}\setminus \{z_0\},\\
\vv(x)+c & \mbox{if } x\in\T^{z_0}.
\end{cases}
\]
Then
\begin{equation}\label{des-wl1}
w_l(x)= \vv(x) \quad \forall x\in\T^{n+l-1}\setminus \{z_1\},
\end{equation}
and,  by (v), \eqref{Pro} and the fact that $\vv$ is $F$-harmonic, we get
\begin{equation}\label{des-wl2}
\begin{aligned}
w_l(z_1)&=F(w_l(z_1,0),w_l(z_1,1),\dots,w_l(z_1,m-1))\\
&=F(\vv(z_0)+c,\vv(z_1,1),\dots,\vv(z_1,m-1))\\
&\le F(\vv(z_1,0),\vv(z_1,1),\dots,\vv(z_1,m-1))+c\kappa\\
&=\vv(z_1)+c\kappa.
\end{aligned}
\end{equation}

Let $z_2\in\T^{n+l-2}$ such that $z_1=(z_2,0).$ Then, by \eqref{des-wl1},
\[
w_l(x)= \vv(x) \quad \forall x\in\T^{n+l-1}\setminus \{z_2\},
\]
and, using that $w_l$ is $F$-harmonic, \eqref{des-wl2}, (v), \eqref{Pro} and the fact that $\vv$ is $F$-harmonic, we get
\begin{align*}
w_l(z_2)&=F(w_l(z_2,0),w_l(z_2,1),\dots,w_l(z_2,m-1))\\
&=F(w_l(z_1),\vv(z_2,1),\dots,\vv(z_2,m-1))\\
&\le F(\vv(z_1)+c\kappa,\vv(z_2,1),\dots,\vv(z_2,m-1))\\
&\le  F(\vv(z_2,0),\vv(z_2,1),\dots,\vv(z_2,m-1))+c\kappa^2\\
&= \vv(z_2)+ c\kappa^2.
\end{align*}

By repeating this procedure $n+l-2$ times we can obtain \eqref{desvw1}. Therefore
taking limit as $l\to+\infty$ in \eqref{desvw1}, we have that
\[
\lim_{l\to+\infty}w_l(\emptyset)=\vv(\emptyset).
\]

\noindent{\bf Case 2}.  $k_1=m^n-1.$ The proof of this case is similar to the previous one.

\noindent{\bf Case 3}.
Finally we will study the case $0<k_0\le k_1<m^n-1.$

Let $l\in\N.$ We define
$I_l^1=[\frac{k_0}{m^n}-\frac1{m^{n+l}},\frac{k_0}{m^n}],$
$I_l^2=[\frac{k_{1}+1}{m^n},\frac{k_{1}+1}{m^n}+\frac1{m^{n+l}}]$ and
$w_l(x)\defi\mathfrak{v}_{(f,I_l^1\cup I \cup I_l^2,c)}.$
As in case 1,  $w_l\in U_F(f,I,c)$ for all $l\in\N$ and
\[
w_l(x)\ge \vv(x)
\]
for all $x\in\T$ and $l\in\N.$

We will prove the following inequality
\[
\vv(\emptyset)\le w_l(\emptyset)\le \vv(\emptyset)+2c\kappa^{n+l},
\]
for all $l\in\N.$

Let $l\in\N$ and $z_1^1, z_1^2 \in\T^{n+l-1}$ such that
$I_l^1=I_{z_0^1}$ and $I_j^2=I_{z_0^2}$ where
$z_0^1=(z_1^1,m^n-1)$ and  $z_0^2=(z_1^2,0).$
Observe that
$z_1^1\neq z_1^2.$

Then, for any $x\in\T^{n+r}$ with $r\in\N_{l},$ we have that
\[
w_l(x)=
\begin{cases} \vv(x)& \mbox{if } x\in\T^y \mbox{ with } y\in\T^{n+l}\setminus \{z_0^1,z_0^2\},\\
\vv(x)+c & \mbox{if } x\in\T^{z_0^1}\cup\T^{z_0^2}.
\end{cases}
\]
Then
\begin{equation}\label{des-wl11}
w_l(x)= \vv(x) \quad \forall x\in\T^{n+l-1}\setminus
\{z_1^1,z_1^2\}
\end{equation}
and,by (v), \eqref{Pro}, using that
$F$ is a permutation invariant averaging operator
and the fact that $\vv$ is $F$-harmonic, we get
\begin{equation}\label{des-wl22}
\begin{aligned}
w_l(z_1^j)&=F(w_l(z_1^1,0),w_l(z_1^1,1),\dots,w_l(z_1^1,m-1))\\
&=F(\vv(z_1^1,0),\vv(z_1^1,1),\dots,\vv(z_0^1)+c)\\
&\le F(\vv(z_1^1,0),\vv(z_1^1,1),\dots,\vv(z_1^1,m-1)))+c\kappa\\
&=\vv(z_1^1)+c\kappa
\end{aligned}
\end{equation}
and
\[
\begin{aligned}
w_l(z_1^2)&=F(w_l(z_1^2,0),w_l(z_1^1,1),\dots,w_l(z_1^2,m-1))\\
&=F(\vv(z_0^2)+c,\vv(z_1^2,1),\dots,\vv(z_1^2,m-1))\\
&\le F(\vv(z_1^2,0),\vv(z_1^2,1),\dots,\vv(z_1^2,m-1))+c\kappa\\
&=\vv(z_1^2)+c\kappa.
\end{aligned}
\]

Let $z_2^1,z_2^2\in\T^{n+l-2}$ such that $z_1^1=(z_2^1,m-1)$
and $z_1^2=(z_2^2,0).$ Then, by \eqref{des-wl11},
\[
w_l(x)= \vv(x) \quad \forall x\in\T^{n+l-1}\setminus
\{z_2^1,z_2^2\}.
\]
Using that $w_l$ is $F$-harmonic,
\eqref{des-wl22}, (v), \eqref{Pro},
$F$ is a permutation invariant averaging operator
and the fact that $\vv$ is $F$-harmonic, we get
\begin{align*}
w_l(z_2^1)&=F(w_l(z_2^1,0),w_l(z_2^1,1),\dots,w_l(z_2^1,m-1))\\
&=F(\vv(z_2^1,0),\vv(z_2^1,1),\dots,w_l(z_1^1))\\
&\le F(\vv(z_2^1,0),\vv(z_2^1,1),\dots,\vv(z_1^1)+c)\\
&\le  F(\vv(z_2^1,0),\vv(z_2^1,1),\dots,\vv(z_2^1,m-1))+c\kappa^2\\
&= \vv(z_2^1)+ c\kappa^2
\end{align*}
and, by \eqref{des-wl22},
\begin{align*}
w_l(z_2^2)&=F(w_l(z_2^2,0),w_l(z_2^2,1),\dots,w_l(z_2^2,m-1))\\
&=F(\vv(z_1^2),\vv(z_2^2,1),\dots,w_l(z_2^2,m-1))\\
&\le F(\vv(z_1^2)+c,\vv(z_2^2,1),\dots,\vv(z_2^2,m-1)\\
&\le  F(\vv(z_2^2,0),\vv(z_2^2,1),\dots,\vv(z_2^2,m-1))+c\kappa^2\\
&= \vv(z_2^2)+ c\kappa^2.
\end{align*}

There exists $z\in\T^k,$ $0\le k\le n+l-1,$
such that $z_2^1, z_2^2\in\T^{z}$
and there exists $\{z_{j}^i\}_{j=3}^{n+l-1},$ $i=1,2,$ such that
\[
z_{j}^i\in \S(z_{j+1}^i)\quad\forall j\in\{2,\dots,n+l-k-1\}
\quad\forall i\in\{1,2\},
\]
\[
z_{n+l-k}^1=z_{n+l-k}^2=z \mbox{ and }
 z_{n+l-k-1}^1\neq z_{n+l-k-1}^1.
\]

Arguing as before, for any $j\in\{0,\dots,n+l-k-1\}$
we have that
\begin{align*}
w_l(x)&= \vv(x), \quad \forall x\in\T^{n+l-j}\setminus
\{z_j^1,z_j^2\},\\
w_l(z_j^i)&\le\vv(z_j^i)+c\kappa^j, \quad i\in\{1,2\}.
\end{align*}

Therefore
\[
w_l(x)= \vv(x), \quad \forall x\in\T^{k}\setminus
\{z\},\\
\]
and using that $w_l$ is $F$-harmonic, (v), Remark \ref{doblePro},
that $F$ is a permutation invariant averaging operator and
the fact that $\vv$ is $F$-harmonic, we get
\[
w_l(z)\le \vv(z)+2c\kappa^{n+l-k}.
\]

Then, the following inequality
\[
\vv(\emptyset)\le w_l(\emptyset)\le\vv(\emptyset)+2c\kappa^{n+l}, \forall l\in\N
\]
can be proved in the same way as in the case 1. Therefore
\[
\lim_{l\to+\infty}w_l(\emptyset)=\vv(\emptyset).
\]
The proof is now complete.
\end{proof}

\begin{lem}\label{1int}
Let $F$ be a permutation invariant averaging operator
with the pro\mbox{per}ty \eqref{Pro}, $f:[0, 1]\to\R$ be a continuous
function,
$I= \left[\frac{k}{m^n}, \frac{k+1}{m^n}\right]$
with $k\in \{0, \dots, m^{n}-1\}$ ($n\in\mathbb{N}$)
and $c>0.$
If $u$ is the solution of
\textnormal{(DP)} with boundary
data $f$, then
\begin{equation}
\label{desbase}
0\le \vv(\emptyset)-u(\emptyset)\le c|I|^{\gamma}
\end{equation}
for all $\gamma\le -\log_m(\kappa).$

\end{lem}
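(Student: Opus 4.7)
The strategy is a downward induction on the level of the tree, tracking how the perturbation of size $c$ on the single subtree rooted at the level-$n$ vertex $x_k$ (with $I_{x_k}=I$) dilutes as it propagates up to the root. Property \eqref{Pro} together with permutation invariance of $F$ provides a factor of $\kappa$ per level.

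\textbf{Step 1: Structure at level $n$.} By construction of $\vv$ (Definition \ref{defiv}) we have $\vv=u+c$ on $\T^{x_k}$ and $\vv=u$ on $\T^y$ for every $y\in\T^n\setminus\{x_k\}$. In particular, at level $n$ itself, $\vv(x_k)=u(x_k)+c$ while $\vv(y)=u(y)$ for all other $y\in\T^n$.

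\textbf{Step 2: Inductive claim.} For every $0\le j\le n$, let $y_j$ denote the unique ancestor of $x_k$ at level $n-j$. I will show by induction on $j$ that
\[
u(y_j)\le \vv(y_j)\le u(y_j)+c\kappa^{j},
\qquad
\vv(z)=u(z)\ \ \forall\, z\in\T^{n-j}\setminus\{y_j\}.
\]
Step 1 is the case $j=0$. For the inductive step, assume the statement holds at level $n-j$. Fix $z\in\T^{n-j-1}\setminus\{y_{j+1}\}$: its $m$ children lie in $\T^{n-j}\setminus\{y_j\}$, hence satisfy $\vv=u$ there, and since both $\vv$ and $u$ are $F$-harmonic, $\vv(z)=u(z)$. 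At $y_{j+1}$ all children agree with $u$ except the one equal to $y_j$. By monotonicity (property (v)) and permutation invariance of $F$, together with \eqref{Pro},
\[
\vv(y_{j+1})\le F\bigl(u(y_{j+1},0),\dots,u(y_j)+c\kappa^{j},\dots,u(y_{j+1},m-1)\bigr)\le u(y_{j+1})+c\kappa^{j+1}.
\]
The lower bound $\vv(y_{j+1})\ge u(y_{j+1})$ is immediate from monotonicity, since all children of $y_{j+1}$ satisfy $\vv\ge u$ (using $\kappa>0$ and the inductive lower bound at $y_j$).

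\textbf{Step 3: Conclusion at the root and rewriting in terms of $|I|$.} Applying the claim with $j=n$ gives $y_n=\emptyset$ and
\[
0\le \vv(\emptyset)-u(\emptyset)\le c\kappa^{n}.
\]
Since $|I|=m^{-n}$, we have $\kappa^{n}=m^{-n(-\log_m\kappa)}=|I|^{-\log_m\kappa}$. Because $0<|I|\le 1$, the function $\gamma\mapsto |I|^{\gamma}$ is nonincreasing, so $|I|^{-\log_m\kappa}\le |I|^{\gamma}$ for every $\gamma\le -\log_m(\kappa)$, yielding \eqref{desbase}.

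\textbf{Anticipated main obstacle.} The whole argument reduces to verifying the inductive step of Step 2 cleanly; the only subtlety is making sure the two uses of $F$'s properties are applied in the correct order. We first invoke monotonicity to insert the upper bound $u(y_j)+c\kappa^j$ in the perturbed slot, then invoke \eqref{Pro} (which applies to any slot thanks to permutation invariance) to pull the $+c\kappa^j$ outside, picking up a factor $\kappa$. Bookkeeping which level-$(n-j)$ vertices are unperturbed (all of them except $y_j$) is the other thing to get right, but it is purely combinatorial: only the unique chain of ancestors of $x_k$ carries the perturbation.
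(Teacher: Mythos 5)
Your proof is correct and takes essentially the same approach as the paper: the paper simply cites the level-by-level argument from the proof of Lemma \ref{vinf} (Case 1), which is exactly the downward induction you carry out explicitly, propagating the perturbation $c$ at level $n$ up to the root while picking up a factor $\kappa$ per level via property \eqref{Pro} and permutation invariance, and then converting $c\kappa^n$ into $c|I|^\gamma$.
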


\begin{proof} In  a similar way to the proof of Lemma \ref{vinf} (case 1),
we can prove that
\begin{equation}
\label{des1}
u(\emptyset)\le \vv(\emptyset)
\le u(\emptyset)+c\kappa^n.
\end{equation}

On the other hand, it is easy to check that
\begin{equation}\label{deskappa}
\kappa^n \le \left(\frac1{m^n}\right)^\gamma
\end{equation}
for all $\gamma\le -\log_m(\kappa).$ Therefore, by \eqref{des1}, \eqref{deskappa} and using that
$c>0,$ the inequality \eqref{desbase} holds.
\end{proof}

Now we consider the case where $I= \left[\frac{k}
{m^n}, \frac{k+1}{m^n}\right]
\cup\left[\frac{k+1}{m^n},
\frac{k+2}{m^n}\right]$ with
$k\in\{0,1,\dots,m^n-2\}$ ($n\in\mathbb{N}$).

\begin{lem}\label{2int}
	Let $F$ be a permutation invariant averaging operator
with the pro\mbox{per}ty \eqref{Pro}, $f:[0, 1]\to\R$ be a
continuous function, $I= \left[\frac{k}
{m^n}, \frac{k+1}{m^n}\right]
\cup\left[\frac{k+1}{m^n},
\frac{k+2}{m^n}\right]$ with
$k\in\{0,1,\dots,m^n-2\}$ ($n\in\mathbb{N}$) and $c>0.$
	If $u$ is the solution of \textnormal{(DP)}
	with boundary
	data $f,$  then	
	\[
0\le \vv(\emptyset)-u(\emptyset)\le
	2^{1-\gamma}c
	|I|^{\gamma}
         \]
	for all $\gamma\le -\log_m(\kappa).$
\end{lem}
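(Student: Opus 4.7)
The plan is to mimic the argument of Lemma~\ref{vinf}, Case~3, adapted to the present situation where the two ``bumped'' vertices $x_k,x_{k+1}\in\T^n$ (with $I_{x_k}\cup I_{x_{k+1}}=I$) may or may not share an immediate parent. The lower bound $0\le\vv(\emptyset)-u(\emptyset)$ is immediate from the comparison principle of Theorem~\ref{cpdp1}, because $\vv$ is the solution of (DP) with boundary data $f+c\Chi\ge f$ while $u$ is the solution of (DP) with boundary data $f$.

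For the upper bound, let $z\in\T^{j_0}$ be the nearest common ancestor of $x_k$ and $x_{k+1}$, and let $y,y'\in\S(z)$ be the two children of $z$ that are (weak) ancestors of $x_k$ and $x_{k+1}$ respectively. First I would show the one-sided propagation estimate
\[
\vv(y)\le u(y)+c\kappa^{n-1-j_0},\qquad \vv(y')\le u(y')+c\kappa^{n-1-j_0},
\]
by exactly the inductive argument used in Case~1 of Lemma~\ref{vinf}: on $\T^y$ the function $\vv$ differs from $u$ only along the single path from $y$ down to $x_k$, so at each step up from $x_k$ property \eqref{Pro} (together with permutation invariance) contributes a factor $\kappa$. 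Then I would apply Remark~\ref{doblePro} at the vertex $z$, where two children of $z$ are ``perturbed,'' to get
\[
\vv(z)\le u(z)+2c\kappa^{n-j_0}.
\]
Finally, propagating this bound from $z$ up to $\emptyset$, only one child of each ancestor is perturbed, so each of the $j_0$ remaining levels contributes a factor $\kappa$, yielding
\[
\vv(\emptyset)-u(\emptyset)\le 2c\kappa^n.
\]
(The degenerate case $j_0=n-1$ — i.e.\ $x_k,x_{k+1}\in\S(z)$ — is even simpler: Remark~\ref{doblePro} applies directly at $z$ with constant $2c\kappa$, and then one propagates $n-1$ levels.)

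To convert this into the stated bound, I would use that $\gamma\le -\log_m\kappa$ implies $\kappa\le m^{-\gamma}$, hence $\kappa^n\le (1/m^n)^\gamma$. Since $|I|=2/m^n$, we have $(1/m^n)^\gamma=2^{-\gamma}|I|^\gamma$, and therefore
\[
\vv(\emptyset)-u(\emptyset)\le 2c\kappa^n\le 2^{1-\gamma}c\,|I|^\gamma,
\]
as required. The main delicate point is the bookkeeping at and around the common ancestor $z$: one has to be careful that Remark~\ref{doblePro} is applicable there (which relies on permutation invariance so that the two perturbed children can be placed in the first two coordinate slots), and that the single-sided propagation estimates feeding into it are correct — everything else is mechanical exponent counting identical to Lemma~\ref{vinf}.
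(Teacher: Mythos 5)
Your proposal is correct and follows essentially the same approach as the paper, which simply states that the bound $\vv(\emptyset)-u(\emptyset)\le 2c\kappa^{n}$ follows ``in a similar way to the proof of Lemma~\ref{vinf} (case~3)'' before doing the same exponent algebra you do. You have spelled out the bookkeeping at the nearest common ancestor (single-sided propagation with \eqref{Pro}, then Remark~\ref{doblePro} at the branch point, then single-sided propagation again) that the paper leaves implicit, and the exponent count $\kappa^{(n-1-j_0)}\cdot\kappa\cdot\kappa^{j_0}=\kappa^{n}$ and the conversion $2c\kappa^n\le 2^{1-\gamma}c|I|^\gamma$ are both accurate.
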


\begin{proof}
In  a similar way to the proof of Lemma \ref{vinf}
(case 3), we can show that
\begin{equation}
	0\le \vv(\emptyset)
	\le u(\emptyset)+ 2c
	\kappa^{n}.
	\label{des3}
\end{equation}

Then, by \eqref{deskappa} and \eqref{des3}, we have that
\[
0\le \vv(\emptyset)-u(\emptyset)\le
	2c\left(\frac1{m^n}\right)^{\gamma}=
	2^{1-\gamma}c
	\left(\frac2{m^n}\right)^{\gamma}
\]
for all $\gamma\le -\log_m(\kappa).$
Thus
\[
0\le \vv(\emptyset)-u(\emptyset)\le
	2^{1-\gamma}c
	|I|^{\gamma}
\]
for all $\gamma\le -\log_m(\kappa).$
\end{proof}

Now,  we are able to prove Theorem \ref{teoestimacion}.
%

\begin{proof}[Proof of Theorem \ref{teoestimacion}]
We begin by taking
	\[
	n=\min\left\{l\in\mathbb{N}\colon \exists
	k\in\{0,\dots,m^{l}-1\}\mbox{ such that
	}\left[\frac{k}{m^l},\frac{k+1}{m^l}\right]\subset
	I\right\}.
	\]
	Observe that
	\begin{equation}
		\frac{1}{m^n}\le|I|<\frac{2}{m^{n-1}}
		\label{des5}
	\end{equation}
	and there exists
	$k_{n-1}\in\{0,\dots,m^{n-1}-2\}$
	such that
	\[
	I \subsetneq
	J_{n-1}\defi
	\left[\frac{k_{n-1}}{m^{n-1}},
	\frac{k_{n-1}+1}{m^{n-1}}\right]
	\cup
	\left[\frac{k_{n-1}+1}{m^{n-1}},
	\frac{k_{n-1}+2}{m^{n-1}}\right].
	\]
	Then $U_F(f,J_{n-1},c)\subset U_F(f,I,c)$ and
	therefore
	\[
	 \inf\left\{w(\emptyset)-u(\emptyset)
		\colon w\in U_F(f,I,c)\right\}
		\le\inf\left\{w(\emptyset)-u(\emptyset)
		\colon w\in U_F(f,J_{n-1},c)\right\}.
	\]
	Then, by Lemma \ref{vinf} and
	Theorem \ref{cpdp1}, we
	have that
	\begin{align*}
	\inf\left\{w(\emptyset)-u(\emptyset)
		\colon w\in U_F(f,I,c)\right\}
		&\le\inf\left\{w(\emptyset)-u(\emptyset)
		\colon w\in U_F(f,J_{n-1},c)\right\}\\
		&=\mathfrak{v}_{(f,J_{n-1},c)}(\emptyset)
		- u(\emptyset)\\
		&\le 2^{1-\gamma}c|J_{n-1}|^\gamma,
    \end{align*}
    for all $\gamma\le -\log_m(\kappa).$
	Therefore, using the fact that
	\[
         |J_{n-1}|=\frac{2m}{m^{n}}
	\]
	and
	\eqref{des5}, we
	have that
	\[
	0\le\inf\left\{w(\emptyset)-u(\emptyset)
		\colon w\in U_F(f,I,c)\right\}\le	
	2c (m|I|)^\gamma,
	\]
	for all $\gamma\le -\log_m(\kappa).$
\end{proof}

If $F$ is a permutation invariant averaging operator with the
property that there exists $0<\eta<1$ such that
\begin{equation}\label{Proinf}
F(x_1+c,x_2,\dots,x_m)\ge F(x_1,x_2,\dots,x_m)+c\eta
\end{equation}
for all $(x_1,\dots,x_m)\in\R^m$ and for all $c>0,$ arguing as
in Theorem \ref{teoestimacion}, we can show
the following result.

\begin{teo}
	Let $F$ be a permutation invariant averaging operator with
	the pro\mbox{per}ty \eqref{Proinf}, $f:[0, 1]\to\R$ be a continuous
	function, $I$ be a subinterval of $[0,1]$ and $c>0.$
	If $u$ is the solution of \textnormal{(DP)}
	with boundary
	data $f$, then
	\[
		\inf\left\{w(\emptyset)-u(\emptyset)
		\colon w\in U_F(f,I,c)\right\}\ge
		c\left(\frac{|I|}{2m}\right)^\theta
	\]
	for all $\theta\ge -\log_m(\eta).$
\end{teo}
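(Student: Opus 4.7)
The plan is to mirror the proof of Theorem \ref{teoestimacion}, reversing inequalities and using \eqref{Proinf} in place of \eqref{Pro}. Two ingredients are needed: (i) a lower bound on $\mathfrak{v}_{(f,J,c)}(\emptyset)-u(\emptyset)$ when $J$ is a single dyadic interval at some level $n$; and (ii) the monotonicity $U_F(f,I,c)\subset U_F(f,J,c)$ whenever $J\subset I$, which is immediate from $\chi_J\leq\chi_I$ combined with $c>0$.

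First I would establish the lower-bound analog of Lemma \ref{1int}: for $J=[k/m^n,(k+1)/m^n]$ and $\mathfrak{v}:=\mathfrak{v}_{(f,J,c)}$, one has $\mathfrak{v}(\emptyset)-u(\emptyset)\geq c\eta^n$. Let $x_k\in\T^n$ be the unique vertex with $I_{x_k}=J$ and denote its ancestors by $x_k=y_n,y_{n-1},\dots,y_0=\emptyset$. A short induction shows that $\mathfrak{v}(z)=u(z)$ at every vertex $z\in\T$ with $\T^z\cap\T^{x_k}=\emptyset$: for $z$ at level $\geq n$ this follows directly from Definition \ref{defiv}, and for $z$ at lower levels it propagates upward through the $F$-harmonic relation (which $u$ also satisfies) because every successor of such a $z$ again has its subtree disjoint from $\T^{x_k}$. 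Setting $\Delta_j:=\mathfrak{v}(y_j)-u(y_j)$, we have $\Delta_n=c$. For $j<n$, exactly one successor of $y_j$, namely $y_{j+1}$, meets $\T^{x_k}$, while the remaining $m-1$ successors satisfy $\mathfrak{v}=u$. Using permutation invariance to put $y_{j+1}$ in the first slot and applying \eqref{Proinf} with shift $\Delta_{j+1}$,
\[
\mathfrak{v}(y_j)=F\bigl(u(y_{j+1})+\Delta_{j+1},u(y_j,i_1),\dots,u(y_j,i_{m-1})\bigr)\geq u(y_j)+\eta\,\Delta_{j+1},
\]
so $\Delta_j\geq\eta\Delta_{j+1}$, and iteration yields $\Delta_0\geq c\eta^n$.

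Next, the comparison-principle portion of the proof of Lemma \ref{vinf} (which relies only on Theorem \ref{cpdp1} and does not invoke \eqref{Pro}) shows that $w(x)\geq\mathfrak{v}_{(f,J,c)}(x)$ for every $x\in\T$ and every $w\in U_F(f,J,c)$; combined with the preceding step this gives
\[
\inf\bigl\{w(\emptyset)-u(\emptyset):w\in U_F(f,J,c)\bigr\}\geq c\eta^n.
\]
To treat a general subinterval $I\subset[0,1]$, choose $n\in\N$ minimal such that some dyadic interval $J=[k/m^n,(k+1)/m^n]$ fits inside $I$. Minimality rules out any dyadic interval at level $n-1$ in $I$, forcing $|I|<2/m^{n-1}=2m/m^n$, i.e.\ $|I|/(2m)<1/m^n$. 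Since $J\subset I$ implies $U_F(f,I,c)\subset U_F(f,J,c)$, we conclude
\[
\inf\bigl\{w(\emptyset)-u(\emptyset):w\in U_F(f,I,c)\bigr\}\geq c\eta^n\geq c\left(\frac{|I|}{2m}\right)^{\!\theta},
\]
where the last step uses that $\theta\geq-\log_m\eta>0$ (since $0<\eta<1$), whence $\eta\geq m^{-\theta}$ and $x\mapsto x^\theta$ is increasing, so $\eta^n\geq m^{-\theta n}\geq(|I|/(2m))^\theta$.

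The main obstacle is the first step: to obtain the clean one-step recursion $\Delta_j\geq\eta\Delta_{j+1}$ one needs exactly one coordinate of the argument of $F$ at $y_j$ to carry the perturbation, so that a single application of \eqref{Proinf} suffices. The combinatorial observation that $\mathfrak{v}=u$ on all vertices whose subtree is disjoint from $\T^{x_k}$ is precisely what makes this possible; starting instead from a two-dyadic-interval $J$ (as in Lemma \ref{2int}) would introduce an extra factor of $2$ that propagates multiplicatively and destroys the sharpness of the exponent, which is why here one reduces to the single-interval case via monotonicity of $U_F$ in its interval argument, rather than mimicking the two-interval setup used in the upper bound.
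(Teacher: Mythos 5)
Your proof is correct, and it fills in the paper's terse ``arguing as in Theorem \ref{teoestimacion}'' in the way the authors evidently intended, with the right adaptations. Two points in your write-up are exactly the ones that make the reversal work. First, the orientation of the interval approximation flips: for the upper bound the paper approximates $I$ \emph{from outside} by a union of two dyadic intervals $J_{n-1}\supset I$ at level $n-1$ (hence needs the two-interval Lemma \ref{2int}), while for the lower bound one must approximate \emph{from inside} by a dyadic interval $J\subset I$ at level $n$, and a single one suffices because $U_F(f,I,c)\subset U_F(f,J,c)$. Second, the recursion $\Delta_j\ge\eta\,\Delta_{j+1}$ hinges on the combinatorial fact that $\mathfrak{v}_{(f,J,c)}=u$ on every vertex whose subtree is disjoint from $\T^{x_k}$, so that at each ancestor $y_j$ of $x_k$ exactly one argument of $F$ carries a (positive) perturbation, and a single application of \eqref{Proinf} together with permutation invariance gives the clean one-step factor $\eta$ --- this is precisely the lower-bound analogue of the computation that gives \eqref{des1} in Lemma \ref{1int}. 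You are also right that only the comparison-principle half of Lemma \ref{vinf} is needed (to get $w\ge\mathfrak{v}_{(f,J,c)}$ on $\T$); the approximating sequence $\{w_l\}$, which is where \eqref{Pro} enters in the paper, plays no role here. The final numerical step $\eta^n\ge m^{-n\theta}\ge(|I|/(2m))^\theta$ matches the stated bound via the minimality inequality $|I|<2/m^{n-1}$.
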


\begin{ex}
The permutation invariant
averaging operator $F_0$ and $F_1$ satisfy
\eqref{Proinf} with $\eta=\frac{\beta}m.$	
\end{ex}

Finally, we prove Corollary \ref{BCP}.

\begin{proof}[Proof of Corollary \ref{BCP}]
We begin by observing that $
g\le f+ M\CChi_I$
due to $\|f\|_\infty+\|g\|_\infty\le M.$
Then, by Theorem \ref{cpdp1},
$
v(\emptyset)- u(\emptyset)\le w(\emptyset) - u(\emptyset)
$
for all $w\in U_F(f,I,M).$ Then
$
v(\emptyset)- u(\emptyset)\le
\inf\{w(\emptyset) - u(\emptyset)\colon w\in  U_F(f,I,M)\}.
$
Therefore, using Theorem~\ref{teoestimacion},
\[
v(\emptyset)- u(\emptyset)\le 2M(m|I|)^{\gamma}
\]
for all $\gamma\le -\log_m(\kappa).$
By a similar argument, we have that
\[
u(\emptyset)- v(\emptyset)\le 2M(m|I|)^{\gamma},
\]
for all $\gamma\le -\log_m(\kappa).$
Thus,
\[
|v(\emptyset)- u(\emptyset)|\le 2M(m|I|)^{\gamma}
=  2M(m\delta)^{\gamma},\quad \forall \gamma\le
-\log_m(\kappa).
\]

Finally, taking $\delta<\frac{1}m\left(\frac{\varepsilon}{2M}\right)^{\frac{1}{\gamma}},$ we get
$
|v(\emptyset)- u(\emptyset)|<\varepsilon,
$
which completes the proof.
\end{proof}



\begin{thebibliography}{BH}

\bibitem{ary} V. Alvarez, J. M. Rodr\'iguez and D. V. Yakubovich, {\it Estimates for nonlinear harmonic "measures'' on trees}. Michigan Math. J. 49 (2001), no. 1, 47--64.

\bibitem{AM} P. Aviles,  J. J. Manfredi,
{\it On null sets of $P$-harmonic measures},
Partial differential equations with minimal smoothness and applications
(Chicago, IL, 1990),33--36, IMA Vol. Math. Appl., 42, Springer,
New York,  1992.

\bibitem{BBS} A. Bj\"{o}rn,  J. Bj\"{o}rn and N. Shanmugalingam,
{\it A problem of Baernstein on the equality of the
$p$-harmonic measure of a set and its closure},
 Proc. Amer. Math. Soc.  134  (2006),  no. 2, 509--519 (electronic).

\bibitem{dpmr} L. M. Del Pezzo, C. A. Mosquera and J.D.~ Rossi,
{\it The unique continuation property for a nonlinear equation on trees},
To appear in J. London Math. Soc.

\bibitem{GLM} S. Granlund, P. Lindqvist and O. Martio,
{\it $F$-harmonic measure in space}, Ann. Acad. Sci. Fenn. Ser.
A I Math.  7  (1982),  no. 2, 233--247.

\bibitem{HKM} J. Heinonen, T. Kilpel\"ainen, and O. Martio,
{\it Nonlinear potential theory of degenerate elliptic equations},
Oxford Mathematical Monographs. Oxford Science Publications.
The Clarendon Press, Oxford University Press, New York,
1993. vi+363 pp.

\bibitem{hr} D. Hartenstine and M. Rudd,
{\it Asymptotic statistical characterizations of $p$-harmonic functions of
 two variables},
 Rocky Mountain J. Math.  41  (2011),  no. 2, 493--504.

\bibitem{hr1} D. Hartenstine and M. Rudd, {\it Statistical functional equations and $p$-harmonious functions},
 Adv. Nonlinear Stud. 13 (2013), no. 1, 191--207.

\bibitem{KLW}
R. Kaufman, J. G. Llorente, and Jang-Mei Wu, {\it Nonlinear harmonic
  measures on trees}, Ann. Acad. Sci. Fenn. Math., 28, (2003), no.~2,
  279--302.

\bibitem{KW}
R. Kaufman and Jang-Mei Wu, {\it Fatou theorem of {$p$}-harmonic functions
  on trees}, Ann. Probab., 28, (2000), no.~3, 1138--1148.

\bibitem{K} J. Kurki (FIN-HELS),
{\it Invariant sets for [Math Processing Error]-harmonic measure} (English summary),
Ann. Acad. Sci. Fenn. Ser. A I Math. 20 (1995), no. 2, 433–436.

\bibitem{LMW} J. G. Llorente, J. J. Manfredi and J. M. Wu,
{\it $p$-harmonic measure is not additive on null sets}
(English summary), Ann. Sc. Norm. Super. Pisa Cl. Sci. (5)
4 (2005), no. 2, 357--373.

\bibitem{MPR3} J. J. Manfredi, M. Parviainen and J.~D.~Rossi.
{\it On the definition and properties of $p$-harmonious
functions}, Annali della Scuola Normale Superiore di Pisa, Clase di Scienze. Vol. XI(2), 215-241, (2012).


\bibitem{MPR} J. J. Manfredi, M. Parviainen and J.~D.~Rossi.
 {\it An asymptotic mean value  characterization for $p$-harmonic
functions.} Procc. American Mathematical Society.
Vol.~138, 881--889, (2010).


\bibitem{Ober} J. J. Manfredi, A. Oberman and A. Sviridov. {\it Nonlinear elliptic PDEs on graphs}. Preprint.

\bibitem{M1}
O. Martio, {\it Potential theoretic aspects of nonlinear
elliptic partial differential equations},
Bericht [Report], 44. Universit\"{a}t Jyv\"{a}skyl\"{a},
Mathematisches Institut, Jyv\"{a}skyl\"{a},  1989. 23 pp.

\bibitem{PSSW} Y. Peres, O. Schramm, S. Sheffield and
D.  Wilson, {\it Tug-of-war and the infinity Laplacian}
 Selected works of Oded Schramm. Volume 1, 2,
595--638, Sel. Works Probab. Stat., Springer, New York,  2011.


\bibitem{PS} Y. Peres and S. Sheffield, {\it Tug-of-war with noise:
a game theoretic view of the $p$-Laplacian}. Duke Math. J. 145(1)
(2008), 91--120.


\bibitem{O1} A. Oberman, {\it Finite Difference Methods for the infinity Laplace and p-Laplace equations}.  J. Comput. Appl. Math. 254 (2013), 65--80.

\bibitem{O2} A. Oberman, {\it A convergent difference scheme for the infinity Laplacian: construction of absolutely minimizing Lipschitz extensions}. Math. Comp., 74 (2005) 251, 1217--1230.

\bibitem{rv} M. Rudd and H. A. Van Dyke, {\it Median values, 1-harmonic functions, and functions of least gradient},
 Commun. Pure Appl. Anal. 12 (2013), no. 2, 711--719.

\bibitem{s-tree}
A. P. Sviridov, {\it Elliptic equations in graphs via stochastic games.}
Thesis (Ph.D.) University of Pittsburgh.
ProQuest LLC, Ann Arbor, MI,  2011. 53 pp.
		

\bibitem{s-tree1} A. P. Sviridov, {\it $p$-harmonious functions with drift on graphs via games.}
 Electron. J. Differential Equations  2011, No. 114, 11 pp.
		

\bibitem{wolf} T. H. Wolff, {\it Gap series constructions for the p-Laplacian}. Paper completed by John Garnett and Jang-Mei Wu. J. Anal. Math. 102 (2007), 371--394.

\end{thebibliography}
\end{document}